\begin{document}
	\title[]
	{Global dynamics of a predator-prey model with alarm-taxis}
	
	\author[]
	{Songzhi Li and Kaiqiang Wang*}
	\thanks{* Corresponding author.}

	\address{Songzhi Li  \hfill\break
		School of Mathematics, Jilin University, Changchun 130012, China}
	\email{lisz20@mails.jlu.edu.cn}
	
	\address{Kaiqiang Wang \hfill\break
		School of Mathematics, Jilin University, Changchun 130012, China}
	\email{wangkq20@mails.jlu.edu.cn}
	\date{}
	\thanks{}
	\subjclass[2020]{35A01, 35B40, 35K57, 35Q92, 92C17}
	\keywords{ Predator-prey, alarm-taxis, global boundedness, global stability, gradient estimates}

	\begin{abstract}
		This paper concerns with the global dynamics of classical solutions to 
		an important alarm-taxis ecosystem, which demonstrates the 
		behaviors of prey that attract secondary predator when threatened by 
		primary predator. And the secondary predator pursues the signal 
		generated by the interaction of the prey and primary predator.  
		However, it seems that the necessary gradient estimates for global 
		existence cannot be obtained in critical case due to 
		strong coupled structure. Thereby, we develop a new approach to estimate 
		the gradient of prey and primary predator which takes advantage of 
		slightly higher damping power. Then the boundedness of classical 
		solutions in two dimension with Neumann boundary conditions can be 
		established by energy estimates and semigroup theory. Moreover, by 
		constructing Lyapunov functional, it is proved that the coexistence 
		homogeneous steady states is asymptotically stability and the 
		convergence rate is exponential under certain 
		assumptions on the system coefficients.
		
	\end{abstract}
	
	\maketitle
	\let\oldsection\section
	\renewcommand\section{\setcounter{equation}{0}\oldsection}
	\renewcommand\thesection{\arabic{section}}
	\renewcommand\theequation{\thesection.\arabic{equation}}
	\newtheorem{theorem}{\indent Theorem}[section]
	\newtheorem{lemma}[theorem]{\indent Lemma}
	\newtheorem{proposition}[theorem]{\indent Proposition}
	\newtheorem{definition}{\indent Definition}[section]
	\newtheorem{remark}{\indent Remark}[section]
	\newtheorem{corollary}{\indent Corollary}[section]
	\newtheorem{claim}{\indent Claim}
	
	\allowdisplaybreaks
	
	\section{Introduction and main results}
	In the nature, many species emit alarm calls when approached by predators, 
	which are an important mechanism of anti-predator behavior in the 
	ecological food web \cite{KS1}. Alarm call signals may be chemical, 
	acoustic, visible activities of movement or any other forms that might be 
	detectable by the receiver. There is a hypothesis named burglar alarm which 
	is an indirect anti-predator behavior and a form of the attracting a 
	secondary predator hypothesis whereby the secondary predator feeds on the 
	primary predator \cite{CBS}. As a consequence, it could improve prey's 
	chances of survival despite secondary predator's threat. This attraction of 
	a secondary predator has been observed in, for instance, the marine 
	environment in chemical release by dinoflagellate bioluminescence \cite{AT} or the 
	interactions among plants, herbivorous mites, and predatory mites \cite{DSBP}. To 
	verify this hypothesis and pursue mathematical analysis, the 
	one-dimensional alarm-taxis model was recently proposed in \cite{HB}:
	\begin{equation}\label{origin}
		\left\{
		\begin{aligned}
			&u_t = \eta d u_{xx} + f(u,v,w), & x& \in (0,L),\ t>0,\\
			& v_t = (dv_x-\xi vu_x)_x + g(u,v,w), & x& \in (0,L),\   t>0,\\
			&w_t=(w_x-\chi w \phi_x (u,v))_x + h(u,v,w), & x& \in (0,L),\   t>0,\\
			&u_x=v_x-\xi vu_x=w_x-\chi \phi_x(u,v)=0, & x&\in 0, L, \  t>0,\\
			& (u,v,w)(x,0)=(u_0,v_0,w_0)(x)>0, & x & \in (0,L),
		\end{aligned}\right.
	\end{equation}
where $ u,v,w $ represent resource or prey (e.g. dinoflagellates), primary 
predator (e.g. copepods), and secondary predator (e.g. fish), respectively; $ 
\eta, d, \xi, \chi, s, q, b_1, b_2, c_1, c_2 >0 $ and $ b_3, c_3 \ge 0 $ are 
constants. The interaction of the prey resource and primary predator creates a 
signal whose intensity is given by $ \phi (u,v)>0 $. Whilst there are many ways 
one could represent this signal intensity and one of the plausible assumptions 
is that it is proportional to the encounter rate between the prey and the 
predator, i.e. $ \phi (u,v) \propto uv $. When $ f(u,v,w)=u(1-u-b_1v-b_3w) $, $ 
g(u,v,w)=v(s(1-v)+c_1u-b_2w) $ and $ h(u,v,w)=w(q(1-w)+c_2v+c_3u) $, the 
existence of global bounded solutions of \eqref{origin} was established in 
\cite{HB}, and the global stability of coexistence steady state for the special 
case $ b_3=c_3=0 $ was shown under certain conditions. To further study the 
multi-dimensional form, \eqref{origin} can be rewritten as
	\begin{equation}\label{multi}
			\left\{
		\begin{aligned}
			&u_t = d_1\Delta u + \mu_1 u(1-u) -b_1uv-b_3 f_1(u,w),&x\in\ & \Omega, t>0,\\
			&v_t = d_2\Delta v -\nabla \cdot (\xi v \nabla u)+ \mu_2 v(1-v) + uv -b_2vw,&x\in \ &\Omega, t>0,\\
			&w_t= \Delta w - \nabla \cdot [\chi w\nabla(uv)]+\mu_3 w(1-w)+vw+c_3f_1(u,w),&x\in \ &\Omega, t>0,\\
			&\frac{\partial u}{\partial \nu} = \frac{\partial v}{\partial \nu} =\frac{\partial w}{\partial \nu} =0, &x\in \ &\partial \Omega, t>0,\\
			&u(x,0)=u_0(x), v(x,0)=v_0(x), w(x,0)=w_0(x), &x\in \ &\Omega
		\end{aligned}
		\right.
	\end{equation}
	in a bounded smooth domain $ \Omega \subset \mathbb{R}^N $ with parameters $ d_1, d_2, \mu_1, \mu_2, \mu_3, b_1, b_2, \xi, \chi >0 $ and $ b_3, c_3 \ge 0 $. Moreover, $ f_1(u,w) $ represents different functional response function. The relative results have been obtained in \cite{JWW} when $ N=2 $ and $ f_1(u,w)= \frac{uw}{u+w} $. They established the global existence and boundedness of classical solution to \eqref{multi} and studied the global stability for the case of food chain and the case of intraguild predation.
	
	Inspired by aforementioned discussion, we focus on following strong coupled alarm-taxis system, which greatly indicates the interaction among prey, primary predator and secondary predator.
	\begin{equation}\label{model}
		\left\{
		\begin{aligned}
			&u_t = d_1\Delta u + r_1u(1-u) -b_1uv-b_3uw,&x\in\ & \Omega, t>0,\\
			&v_t = d_2\Delta v -\nabla \cdot (\xi v \nabla u)+ r_2 v(1-v) + uv -b_2vw,&x\in \ &\Omega, t>0,\\
			&w_t= \Delta w - \nabla \cdot [\chi w\nabla (uv)]+r_3 w(1-w^\sigma)+vw+uw,&x\in \ &\Omega, t>0,\\
			&\frac{\partial u}{\partial \nu} = \frac{\partial v}{\partial \nu} =\frac{\partial w}{\partial \nu} =0, &x\in \ &\partial \Omega, t>0,\\
			&u(x,0)=u_0(x), v(x,0)=v_0(x), w(x,0)=w_0(x), &x\in \ &\Omega,
		\end{aligned}
		\right.
	\end{equation}
	where $ \Omega $ is a bounded smooth domain in $ \mathbb{R}^2 $ denoting the habitat that the species reside, $ \partial \Omega $ is the boundary of $ \Omega $ and $ \nu $ denotes the outward unit normal vector on $ \partial \Omega $. The functions and parameters have the following biological meanings: $ u $, $ v $ and $ w $ represent density of prey, primary predator and secondary predator, respectively; $ d_1 $ and $ d_2 $ are positive constants representing the diffusion rates of prey and primary predator, respectively; the positive constants $ \xi $ and $ \chi $ are defined as the prey-taxis and alarm-taxis coefficients, respectively; $ r_1, r_2, r_3>0 $ are referred to intrinsic growth rates of prey, primary predator and secondary predator, respectively. It is worth noting that we replace the usual logistic term $ r_3 w (1-w) $ with $ r_3 w (1-w^\sigma) $ and intend to explore how strong the intra-species competition exponent will ensure the global boundedness of solutions.
	
	This alarm-taxis  model \eqref{model} can be regarded as an extension of the 
	two-species predator-pray system with prey-taxis originally proposed in 
	\cite{KO}. The prototypical prey-taxis model can be written as
	\begin{equation}\label{prey-taxis}
		\left\{
		\begin{aligned}
			u_t&=\Delta u-\nabla\cdot(\rho(u,v)\nabla v)+\gamma uF(v)-uh(u),  
			&x\in\Omega, t>0,\\
			v_t&=D\Delta v-uF(v)+f(v), &x\in\Omega, t>0.
		\end{aligned}
		\right.
	\end{equation}
	In  recent  years, the  global  dynamics  of  \eqref{prey-taxis}  have  been  widely 
	studied from different analytical perspectives for concrete functional 
	response functions (cf.  \cite{ABN,HZ, JW2,DL,T,W2,WSW,WWS} and  references  
	therein). On the other hand, many scholars paid much attention to 
	three-species 
	predator-prey systems. For example, the food chain model have been studied 
	by delicate coupled energy estimates in \cite{JW}; the global boundedness 
	of two-prey with competition (repulsion-taxis) and one-predator model was 
	also established in \cite{LW}. For other results of predator-prey model, we 
	refer to  \cite{RL,TW6,WW,WLS} and  references  
	therein.
	
	Our first result is the global existence of solutions with uniform-in-time bound.
	\begin{theorem}\label{GS}
		Let $ \Omega \subset \mathbb{R}^2 $ be a bounded domain with smooth boundary and all the parameters be positive constants. Assume that $ u_0\in W^{2,\infty}(\Omega) $, $ (v_0,w_0)\in [W^{1,\infty}(\Omega)]^2 $ with $ u_0,v_0,w_0 \gneqq 0 $ and $ \sigma>1 $. Then the problem \eqref{model} has a unique global classical solution $ (u,v,w)\in [C^0(\bar \Omega \times [0,\infty)\cap C^{2,1}(\bar \Omega \times (0,\infty)) ]^3 $ satisfying $ u,v,w>0 $ for all $ t>0 $. Moreover, the solution satisfies
		\begin{equation*}
			\| u(\cdot, t) \|_{W^{1,\infty}} +\| v(\cdot, t) \|_{W^{1,\infty}} + \| w(\cdot, t) \|_{L^\infty}  \leq C \quad \mathrm{for}\ \mathrm{all}\  t > 0,
		\end{equation*} 
		where $ C>0 $ is a constant independent of $ t $.
	\end{theorem}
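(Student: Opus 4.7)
The plan is to combine Amann's local existence theory with a bootstrap of coupled $L^p$ energy estimates and Neumann heat-semigroup smoothing, using the super-logistic damping $\sigma > 1$ as the essential ingredient that closes the chain. First I would invoke Amann's theorem for quasilinear parabolic systems to obtain a unique maximal classical solution on $[0, T_{\max})$ together with an extension criterion asserting that $T_{\max} < \infty$ forces $\|u\|_{W^{1,\infty}} + \|v\|_{W^{1,\infty}} + \|w\|_{L^\infty}$ to become unbounded as $t \nearrow T_{\max}$. Because the nonlinearity $-b_1 uv - b_3 uw$ is non-positive, the parabolic maximum principle applied to the first equation gives at once $\|u(\cdot,t)\|_{L^\infty} \leq M := \max\{1,\|u_0\|_{L^\infty}\}$. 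Integrating the $v$- and $w$-equations over $\Omega$ and exploiting the logistic, respectively super-logistic, dissipation yields uniform $L^1$ bounds on $v$ and $w$; for $w$ one must absorb the production terms $\int vw + \int uw$ into $r_3\int w^{1+\sigma}$, which already requires $\sigma > 1$.

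The centrepiece is a coupled $L^p$ estimate. Testing the $v$-equation against $v^{p-1}$ produces a prey-taxis contribution of the form $-\tfrac{(p-1)\xi}{p}\int v^p\Delta u$, which I would rewrite via the first equation as
\begin{equation*}
	-\frac{(p-1)\xi}{p d_1}\int v^p\bigl[u_t - r_1 u(1-u) + b_1 uv + b_3 uw\bigr],
\end{equation*}
so that the time derivative contributes to a Lyapunov-type functional and the remainder is absorbed into the dissipation $-r_2\int v^{p+1}$. Testing the $w$-equation against $w^{p-1}$ and expanding $\nabla(uv) = u\nabla v + v\nabla u$, Young's inequality together with $u\le M$ yields
\begin{equation*}
	\frac{1}{p}\frac{d}{dt}\int w^p + \frac{p-1}{2}\int w^{p-2}|\nabla w|^2 + r_3\int w^{p+\sigma} \leq C\int w^p\bigl(|\nabla v|^2 + v^2|\nabla u|^2\bigr) + C\int (1+v)w^p.
\end{equation*}
The super-logistic term $\int w^{p+\sigma}$ with $\sigma > 1$ is precisely what is required to absorb, via a second Young's inequality, the right-hand side contributions coming from $|\nabla u|^2$, $|\nabla v|^2$ and $v^2$ once these are controlled in sufficiently high $L^q$-norms; the borderline case $\sigma = 1$ would leave a non-absorbable residue.

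The main obstacle, as the abstract stresses, is deriving $L^\infty$ bounds on $\nabla u$ and $\nabla v$. My approach is to feed the $L^p$-bounds of the previous step into the Duhamel representation
\begin{equation*}
	\nabla u(\cdot,t) = \nabla e^{d_1 t\Delta} u_0 + \int_0^t \nabla e^{d_1(t-s)\Delta}\bigl[r_1 u(1-u) - b_1 uv - b_3 uw\bigr](\cdot,s)\,ds,
\end{equation*}
and the analogous formula for $\nabla v$, in which the prey-taxis forcing $-\nabla\cdot(\xi v\nabla u)$ is treated perturbatively. Invoking the standard $L^p$–$L^q$ smoothing estimates for the Neumann heat semigroup on the planar domain $\Omega$, every $L^p$-bound on $(v,w)$ upgrades $\nabla u$ to $L^q$ for a strictly larger $q$, culminating in $\nabla u \in L^\infty$; the coupling in the $v$-equation is resolved by iterating the semigroup estimate with the refined $\nabla u$-bounds, and this iteration is the new approach advertised in the abstract, since each step relies on $\sigma > 1$ to keep the $L^p$-control of $w$ intact. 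Once $u, v \in L^\infty(0,T_{\max};W^{1,\infty})$ is in hand, the $w$-equation reduces to a scalar drift–diffusion equation with bounded drift and super-linear absorption, so a Moser-type iteration yields $\|w(\cdot,t)\|_{L^\infty} \le C$ uniformly in time. The extension criterion then forces $T_{\max} = \infty$ and delivers the stated uniform bound.
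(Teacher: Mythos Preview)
Your overall architecture (Amann local theory, maximum principle for $u$, $L^1$ bounds, coupled $L^p$ estimates, semigroup bootstrapping, extension criterion) matches the paper's, but the two concrete mechanisms you propose for closing the estimates both have genuine gaps, and you mislocate where $\sigma>1$ actually enters.

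First, your treatment of the prey-taxis term in the $v^{p-1}$ test is problematic. Substituting $\Delta u=\tfrac{1}{d_1}\bigl(u_t-r_1u(1-u)+b_1uv+b_3uw\bigr)$ produces $-\tfrac{(p-1)\xi}{pd_1}\int v^p u_t$; turning this into a Lyapunov piece requires $\tfrac{d}{dt}\int v^p u$, which reintroduces $p\int u\,v^{p-1}v_t$ and hence the full taxis term again, so nothing is gained without a smallness condition of the type $\xi\|u\|_{L^\infty}<d_1$ that the theorem does not assume. The paper instead never substitutes: it first tests the $u$-equation with $-\Delta u$ to get a uniform bound on $\|\nabla u\|_{L^2}$ and a \emph{space--time} bound $\int_t^{t+\tau}\|\Delta u\|_{L^2}^2\le C$, then uses H\"older and Gagliardo--Nirenberg on $\|v^p\Delta u\|_{L^1}\le\|\Delta u\|_{L^2}\|v^{p/2}\|_{L^4}^2$ together with an entropy estimate $\int_t^{t+\tau}\int|\nabla v|^2/v\le C$ and a short-interval Gr\"onwall to obtain $\|v\|_{L^p}\le C(p)$ for every $p<\infty$, \emph{without} any information on $w$ beyond the basic space--time bound.

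Second, and more importantly, your bootstrap for $\nabla u$ cannot start. From the $L^1$ estimates one only has $\|w\|_{L^1}\le C$ uniformly and $\int_t^{t+\tau}\int w^{\sigma+1}\le C$; feeding the uniform $L^1$ bound into the Duhamel formula for $\nabla u$ in two dimensions yields at best $\|\nabla u\|_{L^p}$ for $p<2$, and with only $\nabla u\in L^{2-\varepsilon}$ the cross term $\int w^p|\nabla u|^2$ in your $w$-estimate cannot be absorbed by $\int w^{p+\sigma}$ regardless of $\sigma$. The paper's decisive step (its Lemma~3.4) is to use the \emph{space--time} bound $\int_t^{t+\tau}\|w\|_{L^{\sigma+1}}^{\sigma+1}\le C$ directly in the Duhamel integral for $\nabla u$, applying H\"older in the time variable; this gives a uniform bound $\|\nabla u\|_{L^p}\le C$ for every $p<\tfrac{2(\sigma+1)}{3-\sigma}$, and this threshold exceeds $2$ precisely when $\sigma>1$. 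That is the single place the strict inequality is used; your proposal attributes it instead to the $L^1$ step (where in fact $\sigma\ge1$ suffices, since the paper cancels the cross terms $\int uv,\int vw,\int uw$ by a suitable linear combination of the equations) and to a Young-inequality absorption in the $w$-equation that is circular. Once $\|\nabla u\|_{L^{p}}$ with some $p>2$ is in hand, the paper gets $\|v\|_{L^\infty}$, then $\|\nabla v\|_{L^2}$ and $\int_t^{t+\tau}\|\Delta v\|_{L^2}^2$, then an entropy estimate for $w$, then $\|w\|_{L^p}$ for all $p$, and only then the $W^{1,\infty}$ and $L^\infty$ bounds you describe at the end.
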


In this paper, due to the benefits of spatio-temporal estimates possessed by 
source terms, we shall have the opportunity to take a step-by-step approach to 
complete the proof of the theorem. In the first half of the proof, our goal is 
to derive the global estimate of $ \| v(\cdot,t)\|_{L^\infty} $, for which we 
primarily use the multiplier method to get the boundeness of $ \| \nabla 
u(\cdot,t)\|_{L^2} $. Then, combining the sobolev embedding theorem in two 
dimensions, we use the spatio-temporal estimate brought by $ \| v(\cdot,t)\ln v 
(\cdot,t)\|_{L^1} $ to reach the $ L^p $ bound of $ v $. With these results, 
the important gradient estimate of $ u $ can be obtained (see Lemma 
\ref{le-K8}), which paves the way for subsequent estimates and yields the bound 
of $ \| v (\cdot,t)\|_{L^\infty} $. In the second half of the proof, we 
directly get the global estimate of $ \| \nabla v(\cdot,t)\|_{L^2} $ (see Lemma 
\ref{le-K10-K11}) in view of Lemma \ref{le-K8}  which enlightens the further 
proof and fulfill entropy estimate as well as the estimate of $ 
\int_{t}^{t+\tau} \int_{\Omega} \frac{|\nabla w(\cdot,s)|^2}{w(\cdot,s)} $. 
This helps us avoid complex joint estimates, which may make the condition of $ 
\sigma $ poor. Utilizing the same method as in the first half, the $ L^p $ bound 
of $ w $ can be readily derived that immediately complete the proof by 
collecting regularity theory and semigroup method.

We seize this chance to talk about some applications and expectations for our result. To this end, we consider following general system and corresponding models.

\begin{equation}
	\left\{
	\begin{aligned}
		&u_t = d_1\Delta u + r_1u(1-u) -uF_1(v,w),&x\in\ & \Omega, t>0,\\
		&v_t = d_2\Delta v -\nabla \cdot (\xi v \nabla u)+ r_2 v(1-v) -vF_2(u,w),&x\in \ &\Omega, t>0,\\
		&w_t= \Delta w - \nabla \cdot [w \nabla G(u,v)]+r_3 w(1-w^\sigma)+w F_3(u,v),&x\in \ &\Omega, t>0,\\
		&\frac{\partial u}{\partial \nu} = \frac{\partial v}{\partial \nu} =\frac{\partial w}{\partial \nu} =0, &x\in \ &\partial \Omega, t>0,\\
		&u(x,0)=u_0(x), v(x,0)=v_0(x), w(x,0)=w_0(x), &x\in \ &\Omega,
	\end{aligned}
	\right.
\end{equation}

$ \bullet $ Two-prey with competition (repulsion-taxis) and one predator model: when $ F_1(v,w) =  hv+a_1w $, $ F_2(u,w)=kv+a_2w $, $ F_3(u,v)=u+v $, $ \xi <0 $ and $ G(u,v)= \chi_1u + \chi_2 v $.

$ \bullet $ Two-predator with competition (repulsion-taxis) and one prey model: when $ F_1(v,w) = a_1v+a_2w $, $ F_2(u,w)=-u+hw $, $ F_3(u,v)= u-kv $, $ \xi >0 $ and $ G(u,v)= \chi_1 u - \chi_2v $.

$ \bullet $ Coupled food chain model: when $ F_1 (v,w)= b_1v+b_3w $, $ F_2(u,w) = -u+b_2w $, $ F_3(u,v)= u+v $, $ \xi >0$ and $ G(u,v)=\chi_1 u+\chi_2 v $.

$ \bullet $ General alarm-taxis model: when $ F_1 (v,w)= b_1v+b_3w $, $ F_2(u,w) = -u+b_2w $, $ F_3(u,v)= u+v $, $ \xi >0$ and $ G(u,v)=\chi u^\alpha v^\beta (\alpha, \beta \ge 1) $.

\begin{remark}
	 Using the similar strategy to our proof,  the above four kinds of models have the same result as Theorem \ref{GS} when $ \sigma>1 $.
\end{remark}

\begin{remark}
	When $ \sigma = 1 $, the crucial gradient estimate of $ u $ does not work, so how to get the global boundedness of strong coupled predator-prey system with standard logistic term is still an open problem.
\end{remark}

Our second main result concerns with the asymptotic behavior of solution to 
\eqref{model}. For convenience, we assume that $ r_1=r_2=r_3=1 $ without loss of generality. Since there are logistic terms in all three equations, we 
consider the constant steady state of three-species co-existence, which 
satisfies the following problem
\begin{equation}\label{SS}
	\left\{
	\begin{aligned}
		&u+b_1 v + b_3 w =1,\\
		&-u+v+b_2 w =1,\\
		&w^\sigma -u-v=1.
	\end{aligned}
	\right.\  \Longleftrightarrow \ 
	\left\{
	\begin{aligned}
		& u = \frac{(1-b_1)-(b_3-b_1b_2)w}{b_1+1},\\
		& v= \frac{2-(b_2+b_3)w}{b_1+1},\\
		& (b_1+1)w^\sigma + (b_2+2b_3-b_1b_2)w-4=0.
	\end{aligned}
	\right.
\end{equation}

We shall assume the parameters $ b_1, b_2, b_3 > 0 $ satisfy the following hypothesis:
\begin{equation}\tag{H}\label{H}
	b_1\le 1,\  b_3<b_1b_2,\  b_2+b_3 \le \frac{1}{2}.
\end{equation}

Next, we will explain the rationality of the existence of positive steady state $ (u_*, v_*, w_*) $. First of all, letting $ J(w):= (b_1+1)w^\sigma + (b_2+2b_3-b_1b_2)w-4 $, then $ J(0)=-4 $ and $ J(M) $ is sufficiently large when $ M>0 $ is sufficiently large. So there exists only one $ w_*>0 $ satisfying the third equation of \eqref{SS} according the zero point theorem and monotonicity. Secondly, \eqref{H} implies $ (b_2+2b_3-b_1b_2) \ge 0 $, which shows $ w<4 $. Furthermore, we have $ (1-b_1)-(b_3-b_1b_2)w > 0 $ and $ 2-(b_2+b_3)w> 2-\frac{1}{2}\times 4 =0 $ that verify the existence of $ (u_*, v_*, w_*) $.

The second result of present paper can be stated in the forthcoming theorem.
\begin{theorem}\label{GST}
	Let the assumptions in Theorem \ref{GS} and \eqref{H} hold, as well as $ (u,v,w) $ be the classical solution of \eqref{model}. Assume that $ (u_*, v_*, w_*) $ is the positive three-species coexistence steady state satisfying \eqref{SS}. If
	\begin{equation}\label{bbb}
		(b_1b_2-b_3)^2<4b_2b_3,
	\end{equation}
	then there exist $ \chi_1>0 $ and $ \xi_1>0 $ such that whenever $ 
	\chi\in(0,\chi_1) $ and $ \xi\in(0,\xi_1) $ 
	\begin{equation}
		\|u-u_*\|_{L^\infty}+\|v-v_*\|_{L^\infty}+\|w-w_*\|_{L^\infty}\le 
		C_1e^{-C_2t}, \quad \mathrm{for}\ \mathrm{all}\  t>t_0,
	\end{equation}
	with some $ t_0>1 $, where $ C_1,C_2>0 $ are two constants independent of $ t $.
\end{theorem}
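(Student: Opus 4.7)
The plan is to establish the exponential approach to $(u_*,v_*,w_*)$ via a weighted relative-entropy Lyapunov functional whose reaction dissipation matches \eqref{bbb} exactly. Set $\mathcal{H}(y,y_*):=y-y_*-y_*\ln(y/y_*)\ge 0$ and introduce
\[
E(t):=\int_\Omega \mathcal{H}(u,u_*)\,dx+\frac{b_3}{b_2}\int_\Omega \mathcal{H}(v,v_*)\,dx+b_3\int_\Omega \mathcal{H}(w,w_*)\,dx.
\]
Before the functional can yield a decay estimate, a uniform-in-time positive lower bound $u,v,w\ge\delta>0$ for $t\ge t_0$ is required; I would obtain this from a parabolic Harnack inequality applied componentwise, which is legitimate since Theorem \ref{GS} places the drifts $\xi\nabla u$ and $\chi\nabla(uv)$ in $L^\infty$ and keeps the reaction coefficients bounded.

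Differentiating $E$ along \eqref{model} and integrating by parts decomposes $dE/dt$ into a diffusion dissipation $-\mathcal{D}(t)$ (from convexity of $\mathcal{H}$), a cross-taxis piece $\mathcal{T}(t)$ built from $\xi\int(\nabla u\cdot\nabla v)/v$ and $\chi\int(\nabla(uv)\cdot\nabla w)/w$, and a reaction piece $\mathcal{R}(t)$ which, after invoking the steady-state identities \eqref{SS}, reduces to a quadratic form in $(u-u_*,v-v_*,w-w_*)$. The particular scaling $(1,b_3/b_2,b_3)$ is tailored so that the $(u-u_*)(w-w_*)$ and $(v-v_*)(w-w_*)$ cross terms in $\mathcal{R}$ vanish identically, while the surviving $(u-u_*)(v-v_*)$ cross term carries coefficient $(b_3-b_1b_2)/b_2<0$ by \eqref{H}. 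Using the lower bound $(w-w_*)(w^\sigma-w_*^\sigma)\ge C_\sigma(w-w_*)^2$ (valid since $\sigma>1$ and $w\ge\delta$), the $2\times 2$ positive-definiteness test on the $(u-u_*,v-v_*)$ block collapses exactly to $4b_2b_3>(b_1b_2-b_3)^2$, which is \eqref{bbb}. Thus
\[
\mathcal{R}(t)\le -\kappa_1\Big(\|u-u_*\|_{L^2}^2+\|v-v_*\|_{L^2}^2+\|w-w_*\|_{L^2}^2\Big)
\]
for some $\kappa_1>0$.

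The taxis contribution $\mathcal{T}(t)$ is then absorbed by $\mathcal{D}(t)$: Young's inequality, combined with the upper and lower bounds on $u,v,w$, gives $|\mathcal{T}(t)|\le C(\xi+\chi)\mathcal{D}(t)$, so choosing $\xi_1$ and $\chi_1$ small enough that $C(\xi_1+\chi_1)\le 1/2$ leads to $dE/dt\le-\tfrac12\mathcal{D}(t)+\mathcal{R}(t)\le-\kappa_1\sum\|\varphi_i\|_{L^2}^2$. The two-sided comparison $c\,(\varphi_i)^2\le \mathcal{H}\le C\,(\varphi_i)^2$ on the bounded positive range provided by Theorem \ref{GS} and the lower bound implies $E\le C'\sum\|\varphi_i\|_{L^2}^2$, hence $dE/dt\le -\kappa E$; Gronwall's inequality then delivers exponential $L^2$ decay of $u-u_*$, $v-v_*$, $w-w_*$. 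Upgrading to the stated $L^\infty$ rate is a Gagliardo--Nirenberg interpolation in two dimensions, using the uniform $W^{1,\infty}$ bounds on $u,v$ from Theorem \ref{GS} together with a $W^{1,p}$ bound on $w$ recovered from one further round of parabolic $L^p$ theory, at the cost of a mildly smaller exponential rate.

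The two substantive obstacles are (i) securing the uniform positive lower bound for $w$, without which $C_\sigma$ would degenerate and the denominator in the alarm-taxis integrand $\int\nabla(uv)\cdot\nabla w/w$ would be uncontrolled, and (ii) identifying the weights $(1,b_3/b_2,b_3)$ as the scaling that simultaneously annihilates the $uw$ and $vw$ cross terms and causes the residual discriminant to land on \eqref{bbb} with no stray factor depending on $\sigma$ or on $\delta$. Once these are in place, the remainder is routine absorption, Gronwall, and interpolation.
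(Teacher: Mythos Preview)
Your proposal is essentially the paper's argument: the same weighted relative-entropy functional (your weights $(1,b_3/b_2,b_3)$ are just $b_3$ times the paper's $(1/b_3,1/b_2,1)$), the same cancellation of the $(u-u_*)(w-w_*)$ and $(v-v_*)(w-w_*)$ reaction cross terms by that choice, the same reduction of the residual reaction form to a $2\times 2$ positive-definiteness test that is exactly \eqref{bbb}, and the same Gagliardo--Nirenberg upgrade from $L^2$ to $L^\infty$ at the end. Two technical points are worth flagging. First, the bound $(w-w_*)(w^\sigma-w_*^\sigma)\ge C_\sigma(w-w_*)^2$ does \emph{not} require a lower bound on $w$: the map $s\mapsto (s^\sigma-w_*^\sigma)/(s-w_*)$ is nondecreasing on $(0,\infty)$ with infimum $w_*^{\sigma-1}$, so $C_\sigma=w_*^{\sigma-1}$ works for every $w>0$; this is precisely how the paper handles it. Second, where a lower bound \emph{is} needed---to compare $E$ with $\sum\|\cdot\|_{L^2}^2$ and close the Gronwall loop---the paper does not use Harnack (which yields local-in-time comparisons, not a uniform-in-$t$ floor). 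Instead, following \cite{JWW}, it first integrates $dE/dt\le -c\sum\|\cdot\|_{L^2}^2$ to get finite total dissipation, combines this with uniform $C^{2+\alpha,1+\alpha/2}$ Schauder bounds (the paper's Lemma~4.2) to deduce $L^\infty$ convergence, and only \emph{then}, for $t>t_0$, obtains the lower bound and the exponential rate. The paper also packages the gradient terms as a single $3\times 3$ quadratic form in $(\nabla u/u,\nabla v/v,\nabla w/w)$ whose positive definiteness for small $\xi,\chi$ uses that $\|u\|_{L^\infty}$ is independent of $\xi,\chi$ and $\|v\|_{L^\infty}$ is independent of $\chi$; your Young-inequality absorption is equivalent, but you should keep this dependence structure in mind so that the thresholds $\xi_1,\chi_1$ are genuinely well defined.
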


\begin{remark}
	Notice that it is feasible when $ b_1, b_2, b_3 $ are subjected to \eqref{H} and \eqref{bbb}, for example, $ b_1=0.5, b_2=0.4, b_3=0.1  $. 
\end{remark}
	
	The outline of this paper is as follows. In Section \ref{2}, we will give 
	some results including the local existence theorem and some elemental estimates 
	of system \eqref{model} which are used frequently for our further proof. In 
	Section \ref{3}, we show some essential estimates and then obtain the 
	global boundedness of alarm-taxis model. Finally, we prove the global 
	stability of coexistence steady states under appropriate assumptions by 
	constructing Lyapunov functional in Section \ref{4}.

	\section{Local existence and preliminaries}	\label{2}
	In what follows, we shall abbreviate $ \int_{\Omega} f dx $ as $ \int_\Omega f $ and $ \int_{t_1}^{t_2} \int_{\Omega} f dx ds $ as $ \int_{t_1}^{t_2}\int_{\Omega} f $ for the sake of  convenience without loss of generality. Moreover, we use  $ K_i $ and  $ c_i $ $ (i=1,2, ...) $ to denote a different upper bound in subsequent lemmas and generic constant independent of $ t $ which  may vary in each proof, respectively. We first give the local existence of \eqref{model}, which can be readily proved by the abstract theory of quasilinear parabolic systems. 
	
	\begin{lemma}\label{LS}
		Let $ \Omega \subset \mathbb{R}^2 $ be a bounded domain with smooth boundary and all the parameters be positive constants. Assume that $ u_0 \in W^{2,\infty}(\Omega) $, $ ( v_0, w_0)\in [W^{1,\infty}(\Omega)]^2 $ with $ u_0,v_0,w_0 \gneqq 0 $ and $ \sigma \ge 1 $. Then there exists a $ T_{max} \in (0,\infty] $ such that the problem \eqref{model} admits a unique classical solution $ (u,v,w)\in [C^0(\bar \Omega \times [0,T_{max})\cap C^{2,1}(\bar \Omega \times (0,T_{max})) ]^3 $
		satisfying $ u,v,w > 0 $ for all $ t>0 $. Moreover,
		\begin{equation*}
			if \ T_{max} < \infty,\ then \  \underset{t\nearrow T_{max}}{\mathrm{limsup}} \left( 	\| u(\cdot, t) \|_{W^{1,\infty}} +\| v(\cdot, t) \|_{W^{1,\infty}} + \| w(\cdot, t) \|_{L^\infty} \right) = \infty.
		\end{equation*}
	\end{lemma}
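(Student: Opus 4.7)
My plan is to recast \eqref{model} in the abstract form $U_t = \nabla\cdot(A(U)\nabla U) + F(U)$ with Neumann data, where $U=(u,v,w)^\top$ and
\begin{equation*}
A(U)=\begin{pmatrix} d_1 & 0 & 0\\ -\xi v & d_2 & 0\\ -\chi wv & -\chi wu & 1 \end{pmatrix}.
\end{equation*}
Because $A(U)$ is lower triangular with diagonal entries $(d_1,d_2,1)$, its spectrum is $\{d_1,d_2,1\}\subset(0,\infty)$ independently of $U$, so the system is normally elliptic in the sense of Amann. Combined with the regularity $u_0\in W^{2,\infty}(\Omega)$, $(v_0,w_0)\in [W^{1,\infty}(\Omega)]^2$ and the smoothness of the reaction $F$ on the positivity cone (in particular $w\mapsto w^{\sigma+1}$ is $C^1$ on $[0,\infty)$ as soon as $\sigma\ge 1$), Amann's local existence theorem for quasilinear parabolic systems produces a unique maximal classical solution on some interval $[0,T_{\max})$ with the claimed $C^{0}\cap C^{2,1}$ regularity.

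Pointwise positivity then follows by exploiting the same triangular structure one equation at a time. The $u$-equation is scalar, $u_t=d_1\Delta u + u(r_1-r_1u-b_1v-b_3w)$, with bounded coefficients on every compact subinterval of $[0,T_{\max})$; the parabolic strong maximum principle together with the Hopf lemma for the homogeneous Neumann condition therefore give $u>0$ from $u_0\gneqq 0$. Treating $u$ as known, the $v$-equation rewrites as $v_t=d_2\Delta v-\xi\nabla u\cdot\nabla v + v(r_2-r_2v+u-b_2w-\xi\Delta u)$, a linear parabolic equation for $v$ with bounded coefficients, so the same maximum principle yields $v>0$; the analogous step applied to the $w$-equation with $u,v$ viewed as prescribed smooth coefficients delivers $w>0$.

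For the continuation criterion, Amann's theory supplies a blow-up alternative in a stronger $C^{1+\alpha}$-type norm, so the task is to downgrade it to the stated form. I would assume a uniform bound $\|u(\cdot,t)\|_{W^{1,\infty}}+\|v(\cdot,t)\|_{W^{1,\infty}}+\|w(\cdot,t)\|_{L^\infty}\le M$ on $[0,T_{\max})$ and bootstrap through the cascade $u\to v\to w$. The $u$-equation has $C^\alpha$ coefficients under the assumed bound, so Schauder theory lifts $u$ to $C^{2+\alpha,1+\alpha/2}$; this in turn makes the drift $\xi v\nabla u$ and the source of the $v$-equation $C^\alpha$-controlled and thus promotes $v$ to $C^{2+\alpha,1+\alpha/2}$; finally, the alarm-taxis flux $\chi w\nabla(uv)=\chi wv\nabla u+\chi wu\nabla v$ is by then $L^\infty$-controlled in terms of the stated norms, so a Neumann heat-semigroup $L^p$-$L^q$ estimate first upgrades $w$ to $W^{1,\infty}$ and one more Schauder step delivers $w\in C^{2+\alpha,1+\alpha/2}$, contradicting the maximality of $T_{\max}$. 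The only delicate point, and the one I would expect to demand the most care, is this last upgrade: one must verify that the expanded form of the alarm-taxis flux is genuinely controlled by the claimed norms and that the two-dimensional Neumann semigroup indeed lifts an $L^\infty$-bound on $w$ to a $W^{1,\infty}$-bound under $C^\alpha$ drift coefficients—a well-documented feature of the heat semigroup on a smooth bounded planar domain, but one whose dependence on the alarm-taxis structure must be tracked explicitly.
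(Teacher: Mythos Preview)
Your approach is essentially the same as the paper's: cast the system in Amann's quasilinear framework via the lower-triangular diffusion matrix, invoke Amann's local existence theory, and obtain positivity componentwise through the strong maximum principle. The paper's proof is in fact much terser---it simply cites Amann's theorem and \cite{JWW} for existence and the extension criterion, and the maximum principle for positivity---so your expansion is in the right spirit.

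One imprecision worth flagging in your bootstrap for the continuation criterion: you write that ``the $u$-equation has $C^\alpha$ coefficients under the assumed bound, so Schauder theory lifts $u$ to $C^{2+\alpha,1+\alpha/2}$.'' But the source term of the $u$-equation contains $b_3uw$, and the hypothesis only gives $w\in L^\infty$, not $C^\alpha$; Schauder does not apply directly at this stage. The standard fix is to insert an $L^p$ step first: with an $L^\infty$ source, maximal parabolic $L^p$-regularity yields $u\in W^{2,1}_p$ for every $p<\infty$, hence $u\in C^{1+\alpha,(1+\alpha)/2}$ by embedding; do the same for $v$; then the $w$-equation has $C^\alpha$ drift $\chi w\nabla(uv)$ and $L^\infty$ source, so $L^p$ theory (or De Giorgi--Nash--Moser) first upgrades $w$ to $C^\alpha$, and only then can a genuine Schauder round close the loop. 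With this reordering your argument goes through.
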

	\begin{proof}
		This proof is a slight adaptation of \cite[Lemma 2.1]{JWW}. That is, the 
		existence and uniqueness of classical solution $ (u,v,w) \in [C^0(\bar 
		\Omega \times [0,T_{max})\cap C^{2,1}(\bar \Omega \times (0,T_{max})) 
		]^3 $ can be obtained by Amann's theorem \cite[Theorem 7.3 and 
		Corollary 9.3]{HA1} and \cite{HA2}. Then applying the strong maximum 
		principle to the equations of \eqref{model}, we have $ (u,v,w)>0 $ for 
		all $ (x,t)\in \Omega\times (0, T_{max}) $ by means of the fact 
		$u_0,v_0,w_0 \gneqq 0  $.
	\end{proof}

	 The following lemma about the boundedness of prey can be regarded as a stepping stone to cope with such problems.
	
	\begin{lemma}\label{le-K1}
		Let the assumptions in Lemma \ref{LS} hold. Then the solution of \eqref{model} satisfies
		\begin{equation}\label{K1}
			\| u(\cdot, t) \|_{L^\infty} \leq K_1\quad \mathrm{for}\ \mathrm{all}\  t\in (0,T_{max}),
		\end{equation}
		where $ K_1 = \mathrm{max} \left\{{1},\| u_0 \|_{L^\infty}\right\} $.
	\end{lemma}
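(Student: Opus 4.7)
The plan is to use a straightforward parabolic comparison argument on the first equation of \eqref{model}. Since Lemma \ref{LS} already guarantees that $v,w>0$ on $\Omega\times(0,T_{\max})$ and $u>0$ there, the coupling terms $-b_1 uv - b_3 uw$ are non-positive. Dropping them yields the differential inequality
\begin{equation*}
u_t - d_1\Delta u \le r_1 u(1-u) \quad \text{in } \Omega\times(0,T_{\max}),
\end{equation*}
together with the homogeneous Neumann condition $\partial_\nu u = 0$ on $\partial\Omega$.

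Next, I would introduce the spatially constant function $\bar u(t) \equiv K_1 = \max\{1,\|u_0\|_{L^\infty}\}$ and verify that it is a supersolution of the above scalar problem. Indeed, $\bar u_t = 0$, $d_1 \Delta \bar u = 0$, and $r_1 \bar u(1-\bar u) \le 0$ because $\bar u \ge 1$; the Neumann condition is trivially satisfied, and by construction $\bar u(0) \ge u_0$ pointwise. An application of the standard parabolic comparison principle (for example, by multiplying the equation for $(u-\bar u)_+$ by $(u-\bar u)_+$, using that $r_1 s(1-s)$ is decreasing for $s\ge 1$, and invoking Gronwall) then gives $u(\cdot,t) \le K_1$ for every $t\in(0,T_{\max})$, which is the assertion \eqref{K1}.

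There is essentially no obstacle here: the whole content is that the $-b_1 uv - b_3 uw$ terms have a definite sign and can be discarded, after which the first equation becomes a scalar Fisher-KPP type inequality whose comparison principle is classical. The only mild care needed is to justify the comparison despite the coupled nature of the full system, but because the inequality $u_t - d_1\Delta u - r_1u(1-u) \le 0$ holds pointwise on the whole existence interval with $u$ regular by Lemma \ref{LS}, the argument is independent of $v$ and $w$ and produces the stated bound with $K_1 = \max\{1,\|u_0\|_{L^\infty}\}$.
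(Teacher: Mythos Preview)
Your proposal is correct and follows essentially the same approach as the paper: drop the nonpositive coupling terms $-b_1uv-b_3uw$ and compare $u$ with a spatially constant supersolution of the resulting Fisher--KPP inequality under Neumann boundary conditions. The only cosmetic difference is that the paper passes through the ODE solution $u^*(t)$ of $\dot u^*=r_1u^*(1-u^*)$ with $u^*(0)=\|u_0\|_{L^\infty}$ and then bounds $u^*(t)\le K_1$, whereas you use the constant $K_1$ directly as the barrier; both yield the same conclusion.
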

	\begin{proof}
		Using the first equation with Neumann boundary condition of \eqref{model}, we have
		\begin{equation*}
			\left\{
			\begin{aligned}
				&u_t -d_1\Delta v = r_1 u(1-u)-b_1uv-b_3uw\le r_1u(1-u) ,&x\in \ &\Omega, t>0,\\
				&\frac{\partial u}{\partial \nu}=0, &x\in \ &\partial \Omega, t>0,\\
				&u(x,0)=u_0(x), &x\in \ &\Omega.
			\end{aligned}
			\right.
		\end{equation*}
	Let $ u^*(t) $ be the solution of following ODE problem
	\begin{equation*}
		\left\{ 
		\begin{aligned}
			&\frac{d u^*(t) }{dt} = r_1 u^* (1-u^*),\quad t>0,\\
			& u^* (0) = \| u_0 \|_{L^\infty}.
		\end{aligned}
		\right.
		\end{equation*}
	Then, by solving this problem, one has $  u^*(t) \le K_1 := \max \left\{ 1, \| u_0 \|_{L^\infty} \right\} $. It is not difficult to see $ u^* (t) $ is a super-solution of the following PDE problem
		\begin{equation*}
		\left\{
		\begin{aligned}
			&U_t -d_1\Delta U = r_1U(1-U) ,&x\in \ &\Omega, t>0,\\
			&\frac{\partial U}{\partial \nu}=0, &x\in \ &\partial \Omega, t>0,\\
			&U(x,0)=u_0(x), &x\in \ &\Omega.
		\end{aligned}
		\right.
	\end{equation*}
It follows that $ 0< U(x,t) \le u^*(t)$. As a consequence, we have
\begin{equation*}
	0 < u(x,t)\le U(x,t)\le u^* (x,t) \le K_1
\end{equation*}
on the basis of comparison principle, which implies \eqref{K1}.
	\end{proof}
	
	Now, we establish basic $ L^1 $ estimate and spatio-temporal estimate about $ v $ and $ w $.
	\begin{lemma}\label{le-K2-K3}
		Let the assumptions in Lemma \ref{LS} hold and $ (u,v,w) $ be the solution of \eqref{model}, then one has two constants $ K_2, K_3>0 $ independent of $ t $ such that
		\begin{equation}\label{K2}
			\| v(\cdot, t) \|_{L^1} +	\| w(\cdot, t) \|_{L^1} \leq K_2\quad \mathrm{for}\ \mathrm{all}\ t\in (0,T_{max})
		\end{equation}
	and
	\begin{equation}\label{K3}
				\int_{t}^{t+\tau}\int_{\Omega} v^2(\cdot,s) + \int_{t}^{t+\tau}\int_{\Omega} w^{\sigma+1}(\cdot,s) \leq K_3\quad \mathrm{for}\ \mathrm{all}\ t\in (0,\widetilde{T}_{max}), 
		\end{equation}
	where
	\begin{equation}\label{tau}
		\tau:= \min \left\{ 1,\frac{1}{2}T_{max}  \right\}\ \mathrm{and}\ \widetilde{T}_{max}:=\left\{
		\begin{aligned}
			&T_{max}-\tau, & \mathrm{if}\ T_{max}<\infty,\\
			&\infty, & \mathrm{if}\ T_{max}=\infty.
		\end{aligned}
		\right.
	\end{equation}
	\end{lemma}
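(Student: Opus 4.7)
The plan is to obtain both estimates simultaneously by working with a carefully weighted combination of the $v$- and $w$-equations. The main obstacle is the strongly coupled reaction term $vw$, which appears as $-b_2vw$ in the $v$-equation and $+vw$ in the $w$-equation; treating the two equations separately would force us to control $\int_\Omega vw$ from above via Young's inequality, thereby complicating the argument (especially when $\sigma$ is only slightly larger than $1$). Introducing
\[
y(t) := \int_\Omega v(\cdot,t) + b_2 \int_\Omega w(\cdot,t)
\]
exploits the anti-symmetry of these cross-terms so that they cancel exactly upon addition.

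Differentiating $y$, integrating the second and third equations of \eqref{model} over $\Omega$, and using the Neumann boundary conditions (which annihilate the divergence and taxis fluxes), the cancellation of the $vw$-terms yields
\[
y'(t) = r_2\int_\Omega v - r_2 \int_\Omega v^2 + \int_\Omega uv + b_2 r_3 \int_\Omega w - b_2 r_3 \int_\Omega w^{\sigma+1} + b_2 \int_\Omega uw.
\]
At this point Lemma \ref{le-K1} enters: $\int_\Omega uv \le K_1 \int_\Omega v$ and $\int_\Omega uw \le K_1 \int_\Omega w$. It then suffices to absorb every linear-in-$v$ and linear-in-$w$ contribution into the dissipative quadratic and $(\sigma+1)$-power terms using pointwise Young's inequalities
\[
(r_2 + K_1 + 1)\,v \le \tfrac{r_2}{2} v^2 + c_1, \qquad (r_3 + K_1 + 1)\,w \le \tfrac{r_3}{2} w^{\sigma+1} + c_2,
\]
where the second bound relies on $\sigma \ge 1$ so that $\sigma + 1 > 1$. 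This produces a differential inequality of the form
\[
y'(t) + y(t) + \tfrac{r_2}{2}\int_\Omega v^2 + \tfrac{b_2 r_3}{2}\int_\Omega w^{\sigma+1} \le c_3
\]
for some positive constant $c_3$ independent of $t$.

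From here both conclusions fall out immediately. Dropping the two nonnegative dissipation terms gives $y'(t) + y(t) \le c_3$, so Gronwall's inequality yields $y(t) \le \max\{y(0), c_3\}$, which is precisely the $L^1$ bound \eqref{K2}. Keeping the dissipation terms and integrating the inequality from $t$ to $t + \tau$ produces
\[
\tfrac{r_2}{2}\int_t^{t+\tau}\!\!\int_\Omega v^2 + \tfrac{b_2 r_3}{2}\int_t^{t+\tau}\!\!\int_\Omega w^{\sigma+1} \le y(t) - y(t+\tau) + c_3\tau \le K_2 + c_3,
\]
which is \eqref{K3} after redefining constants, valid on the interval $(0,\widetilde T_{max})$ since $\tau \le 1$ ensures $t+\tau \le T_{max}$ throughout. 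The definition of $\tau$ in \eqref{tau} precisely guarantees this well-posedness whether or not $T_{max}$ is finite.
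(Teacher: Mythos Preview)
Your proof is correct, and the route you take is mildly different from the paper's. The paper builds a three-term functional
\[
y_1(t)=\Bigl(1+\tfrac{b_1b_2}{b_3}\Bigr)\int_\Omega u + b_1\int_\Omega v + b_1 b_2\int_\Omega w,
\]
with the weights chosen so that the $vw$-terms cancel and the remaining cross-terms $uv$, $uw$ carry a nonpositive sign; Young's inequality then absorbs the linear contributions into $-\int u^2$, $-\int v^2$, $-\int w^{\sigma+1}$. Your two-term functional $y(t)=\int_\Omega v + b_2\int_\Omega w$ achieves the same $vw$-cancellation, but instead of invoking the $u$-equation to control $\int uv$ and $\int uw$ structurally, you simply bound them by $K_1\int v$ and $K_1\int w$ using the $L^\infty$ estimate already available from Lemma~\ref{le-K1}. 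This is a legitimate simplification: it shortens the algebra and avoids tracking a third coefficient, at the cost of relying on Lemma~\ref{le-K1} as an input rather than deriving the $L^1$ bounds independently. Either approach yields the same differential inequality shape and both conclusions \eqref{K2}--\eqref{K3} follow in the identical way. One cosmetic remark: the phrase ``since $\tau\le1$ ensures $t+\tau\le T_{\max}$'' is not quite the reason --- it is the definition $\widetilde T_{\max}=T_{\max}-\tau$ (when $T_{\max}<\infty$) that guarantees $t+\tau<T_{\max}$ for $t\in(0,\widetilde T_{\max})$.
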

	\begin{proof}
		Using the equations of \eqref{model}, we have
		\begin{equation}\label{S2-1}
			\begin{aligned}
				&\frac{d}{dt}\int_{\Omega} u = r_1 \int_{\Omega} u(1-u) - b_1 \int_{\Omega}uv - b_3 \int_{\Omega} uw,\\
				&\frac{d}{dt}\int_{\Omega} v = r_2\int_{\Omega} v(1-v) + \int_{\Omega}uv -b_2\int_{\Omega} uw,\\
				&\frac{d}{dt} \int_{\Omega} w = r_3 \int_{\Omega} w(1-w^\sigma)+\int_{\Omega} vw +\int_{\Omega} uw.
			\end{aligned}
		\end{equation}
		Setting $ y_1 (t) := \left(1+\frac{b_1b_2}{b_3}\right) \int_{\Omega} u + b_1 \int_{\Omega} v + b_1b_2 \int_{\Omega} w $, simple algebraic calculations from \eqref{S2-1} show that 
		\begin{equation*}
			\begin{aligned}
				y_1'(t)+y_1(t)=& \left(r_1+1+\frac{b_1b_2}{b_3}\right)\int_{\Omega} u - r_1 \int_{\Omega} u^2 + (r_2+b_1) \int_{\Omega} v - r_2 \int_{\Omega} v^2 \\
				&+ (r_3+b_1b_2)\int_{\Omega} w -r_3 \int_{\Omega} w^{\sigma+1} - \frac{b_1^2b_2}{b_3} \int_{\Omega} uv\\
				\leq &- \frac{r_1}{2} \int_{\Omega} u^2  -\frac{r_2}{2} \int_{\Omega} v^2  -\frac{r_3}{2} \int_{\Omega} w^{\sigma+1} +c_1,
			\end{aligned}
		\end{equation*}
		where we have used Young's inequality. It follows that
		\begin{equation}\label{A1}
		y_1'(t)+y_1(t) + \delta_1 \left(\int_{\Omega}  u^2 +\int_{\Omega} v^2 + \int_{\Omega} w^{\sigma+1}\right) \le c_1, 
		\end{equation}
		where $ \delta_1 := \mathrm{min}\left\{ \frac{r_1}{2}, \frac{r_2}{2}, \frac{r_3}{2}\right\} $, which first gives \eqref{K2} by Gr\"{o}nwall's inequality and the positivity of solutions. By means of \eqref{K2}, we can integrate \eqref{A1} over $ (t,t+\tau) $ to get
		\begin{equation*}
			\delta_1\int_{t}^{t+\tau} \left(\int_{\Omega} v^2 + \int_{\Omega} w^{\sigma+1}\right) ds \le  \tau c_1+ y_1(t)-y_1(t+\tau) \le c_2.
		\end{equation*}
	The proof is complete.
	\end{proof}
	
	In order to make better use of spatio-temporal estimates like \eqref{K3} and Gr\"onwall type inequality below, we will 
	introduce the important lemma from \cite{W3} which shows the relation 
	between local-in-time estimate and exponential estimate (see another 
	estimation and proof  in \cite{SSW}).
	
	\begin{lemma}\label{inter}
		Let $ T>0 $, $ \tau \in (0,T) $, $ a>0 $ and $ b>0 $. Suppose that $ y: [0,T)\to [0,\infty) $ is absolutely continuous or  $ y\in C^0([0,T))\cap C^1((0,T)) $ and fulfills
		\begin{equation}\label{diff-ineq}
			y'(t) + ay(t) \le h(t)\quad \mathrm{for}\ \mathrm{all}\  t\in (0,T)
		\end{equation}
		with some nonnegative function $ h\in L_{loc}^1 (\mathbb{R}) $ satisfying $ \int_{t}^{t+\tau} h(s) \le b $ for all $ t\in [0,T-\tau) $. Then
		\begin{equation*}
			\int_{0}^{t} e^{-a(t-s)} h(s) ds \le \frac{b}{1-e^{-a\tau}}\quad \mathrm{for}\ \mathrm{all}\ t\in [0,T).
		\end{equation*}
		Consequently,
		\begin{equation}\label{y}
			y(t)\le e^{-at} y(0)+\frac{b}{1-e^{-a\tau}} \le y(0)+\frac{b}{1-e^{-a\tau}}\quad \mathrm{for}\ \mathrm{all}\ t\in [0,T).
		\end{equation}
	\end{lemma}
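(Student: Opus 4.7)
The plan is to establish the two assertions in the stated order. The first, an estimate for the convolution $\int_0^t e^{-a(t-s)} h(s)\,ds$, is the analytic heart of the lemma, while the second follows from it by a standard integrating-factor argument. Throughout, the idea is to balance the exponential decay of the kernel $e^{-a(t-s)}$ against the uniform window bound $\int_{t'}^{t'+\tau} h \le b$ of length $\tau$.

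For the integral bound, I would fix $t\in[0,T)$, set $n:=\lfloor t/\tau\rfloor$ so that $t-n\tau\in[0,\tau)$, and decompose the interval of integration into the $n$ windows of length $\tau$ closest to $t$ plus a leftover piece at the left end:
\begin{equation*}
\int_0^t e^{-a(t-s)} h(s)\,ds = \int_0^{t-n\tau} e^{-a(t-s)} h(s)\,ds + \sum_{k=0}^{n-1}\int_{t-(k+1)\tau}^{t-k\tau} e^{-a(t-s)} h(s)\,ds.
\end{equation*}
On the $k$-th block $[t-(k+1)\tau,\,t-k\tau]$, the kernel is controlled by $e^{-a(t-s)}\le e^{-ak\tau}$, while the window hypothesis applied with $t'=t-(k+1)\tau\in[0,T-\tau)$ gives $\int_{t-(k+1)\tau}^{t-k\tau} h \le b$, so this block contributes at most $b\,e^{-ak\tau}$. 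The residual piece (of length strictly less than $\tau$) is similarly controlled by $b\,e^{-an\tau}$ after bounding the kernel by $e^{-an\tau}$ and using $h\ge 0$ together with the window estimate on $[0,\tau]$. Summing the geometric series $\sum_{k=0}^{\infty} e^{-ak\tau} = (1-e^{-a\tau})^{-1}$ then delivers the bound $b/(1-e^{-a\tau})$.

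For the second assertion, I would multiply the differential inequality $y'(t)+ay(t)\le h(t)$ by the integrating factor $e^{at}$, rewriting it (valid a.e. under either regularity assumption on $y$) as $(e^{at}y(t))' \le e^{at} h(t)$. Integrating over $(0,t)$ and rearranging yields
\begin{equation*}
y(t) \le e^{-at} y(0) + \int_0^t e^{-a(t-s)} h(s)\,ds,
\end{equation*}
and invoking the first part together with $e^{-at}\le 1$ produces \eqref{y}.

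The only subtle point I anticipate is the bookkeeping for the leftover sub-interval $[0,t-n\tau]$: the hypothesis $\int_{t'}^{t'+\tau} h \le b$ is stated only for $t'\in[0,T-\tau)$, so one must observe that since $h\ge 0$, the integral over any sub-interval of length at most $\tau$ is dominated by the integral over a full window on which the hypothesis applies. Once this is noted, the remainder of the argument is a routine geometric-series computation and a textbook Gronwall-type step, with no further technical obstacles.
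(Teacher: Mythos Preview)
Your proposal is correct and follows essentially the same approach as the paper: both decompose $[0,t]$ into $\lfloor t/\tau\rfloor$ windows of length $\tau$ anchored at the right endpoint $t$, bound the kernel by $e^{-ak\tau}$ on the $k$-th window, invoke the window hypothesis, and sum the resulting geometric series; the second assertion is then obtained by the same integrating-factor (Gr\"onwall) step. The only cosmetic difference is that the paper absorbs the leftover piece by extending the integral leftward past $0$ (using $h\ge 0$) to get one extra full window, whereas you treat the stub $[0,t-n\tau]$ separately---your handling of this point is in fact slightly cleaner than the paper's.
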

	\begin{proof}
		For fixed $ t\in (0,T) $, we pick a nonnegative integer $ N $ such that $ N\tau \le t < (N+1)\tau $, by nonnegativity of $ h $ we can estimate
		\begin{equation*}
			\begin{aligned}
				\int_{0}^{t} e^{-a(t-s)} h(s) ds &\le \int_{t-(N+1)\tau}^{t} e^{-a(t-s)} h(s) ds = \sum_{j=0}^{N} \int_{t-(j+1)\tau}^{t-j\tau} e^{-a(t-s)} h(s) ds\\
				&\le  \sum_{j=0}^{N} e^{-aj\tau} \int_{t-(j+1)\tau}^{t-j\tau} h(s) ds.
			\end{aligned}
		\end{equation*}
		Since the condition clearly entails that
		\begin{equation*}
			\int_{t-(j+1)\tau}^{t-j\tau} h(s) ds \le b\quad \mathrm{for}\ \mathrm{all} \ j\in \left\{0,\dots,N\right\}.
		\end{equation*}
		As a result, we have
		\begin{equation*}
			\int_{0}^{t} e^{-a(t-s)} h(s) ds \le b \sum_{j=0}^N e^{-aj\tau} \le b\sum_{j=0}^\infty e^{-aj\tau} = \frac{b}{1-e^{-a\tau}}.
		\end{equation*}
		This alongside with \eqref{diff-ineq} and ODE argument gives \eqref{y}, which completes the proof.
	\end{proof}
	We end this section by some well-known $ L^p-L^q $ estimates for the Neumann heat semigroup.
	\begin{lemma}[\cite{W}]\label{SG}
		Let $ (e^{td\Delta})_{t\geq 0} $ be the Neumann heat semigroup in $ \Omega $, and let $ \lambda_1 >0$ denote the first nonzero eigenvalue of $ -\Delta $ in $ \Omega $ under Neumann boundary conditions. Then for all $ t>0 $, there exist constants $ \gamma_i $$ (i=1,2,3,4) $ depending only on $ \Omega $ such that:
		
		(i) if $ 1\leq q \leq p \leq \infty $ then
		\begin{equation}\label{SG-1}
			\| e^{td\Delta}w \|_{L^p(\Omega)}\leq \gamma_1 \left( 1+t^{-\frac{N}{2}\left(\frac{1}{q}-\frac{1}{p}\right) }\right) \|w\|_{L^q(\Omega)}
		\end{equation}
		holds for all $ w\in L^q(\Omega) $.
		
		(ii) if $ 1\leq q \leq p \leq \infty $ then
		\begin{equation}\label{SG-2}
			\| \nabla e^{td\Delta}w \|_{L^p(\Omega)}\leq \gamma_2 \left( 1+t^{-\frac{1}{2}-\frac{N}{2}\left(\frac{1}{q}-\frac{1}{p}\right) }\right) e^{-\lambda_1d t}\|w\|_{L^q(\Omega)}
		\end{equation}
		holds for all $ w\in L^q(\Omega) $.
		
		(iii) if $ 2\leq p < \infty $ then
		\begin{equation}\label{SG-3}
			\| \nabla e^{td\Delta}w \|_{L^p(\Omega)}\leq \gamma_3 e^{-\lambda_1d t}\|\nabla w\|_{L^p(\Omega)}
		\end{equation}
		holds for all $ w\in W^{1,p}(\Omega) $.
		
		(iv) if $ 1 < q \leq p < \infty $ or $ 1<q<\infty $ and $ p=\infty $ then
		\begin{equation}\label{SG-4}
			\|  e^{td\Delta} \nabla \cdot w \|_{L^p(\Omega)}\leq \gamma_4 \left( 1+t^{-\frac{1}{2}-\frac{N}{2}\left(\frac{1}{q}-\frac{1}{p}\right) }\right) e^{-\lambda_1d t}\|w\|_{L^q(\Omega)}
		\end{equation}
		holds for all $ w\in \left(L^q\left(\Omega\right)\right)^N $.
	\end{lemma}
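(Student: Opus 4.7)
The plan is to build each of the four estimates from the spectral decomposition of the Neumann Laplacian combined with pointwise Gaussian bounds on the associated heat kernel $p(x,y,t)$. Recall that $-\Delta$ under Neumann conditions admits a discrete spectrum $0=\lambda_0<\lambda_1\le\lambda_2\le\cdots$ with $L^2$-orthonormal eigenfunctions $\{e_k\}_{k\ge 0}$ and $e_0\equiv|\Omega|^{-1/2}$. Splitting $w=\bar w+\tilde w$ with $\bar w=|\Omega|^{-1}\int_\Omega w$ separates the fixed zero-mode that the semigroup preserves from the mean-zero part on which the spectral gap $e^{-\lambda_1 d t}$ acts.

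First I would establish the short-time Gaussian upper bound $0\le p(x,y,t)\le c_1 t^{-N/2}\exp(-|x-y|^2/(c_2 t))$ for $t\in(0,1)$; on a smooth bounded domain this is a classical consequence of Aronson's estimates for uniformly parabolic equations in divergence form, and for the bare Neumann Laplacian it can also be obtained via reflection across $\partial\Omega$. For $t\ge 1$ the spectral expansion combined with Weyl's law yields $|p(x,y,t)-|\Omega|^{-1}|\le c_3 e^{-\lambda_1 d t}$. Plugging these pointwise bounds into Young's convolution inequality delivers (i): for $t\le 1$ the Gaussian produces the factor $t^{-\frac{N}{2}(\frac{1}{q}-\frac{1}{p})}$, while for $t\ge 1$ the mean part is controlled by H\"older and the orthogonal part decays exponentially, and combining gives the displayed $(1+t^{-\frac{N}{2}(\frac{1}{q}-\frac{1}{p})})$ factor.

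For (ii) I would differentiate the kernel to obtain $|\nabla_x p(x,y,t)|\le c_4 t^{-(N+1)/2}\exp(-|x-y|^2/(c_5 t))$ on $t\in(0,1)$; this extra $t^{-1/2}$ loss is exactly what appears in the statement, and pairing the Gaussian estimate with the spectral decay on $\tilde w$ for $t\ge 1$ yields the claimed bound via Young's inequality. Statement (iii) follows because $\nabla$ commutes with $e^{td\Delta}$ under Neumann conditions on $W^{1,p}$, so the estimate reduces to decay of the semigroup applied to the mean-zero function $\nabla w$, producing a clean $e^{-\lambda_1 d t}$ with no $t^{-1/2}$ blow-up. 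Finally (iv) is dual to (ii): pairing $\int_\Omega(e^{td\Delta}\nabla\cdot w)\varphi=-\int_\Omega w\cdot\nabla e^{td\Delta}\varphi$, one applies (ii) to $\nabla e^{td\Delta}\varphi$ with the dual exponents $p'\le q'$ and takes the supremum over $\|\varphi\|_{L^{p'}}\le 1$.

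The main obstacle is securing the Gaussian upper bound for $\nabla_x p$ under Neumann boundary conditions on a general smooth bounded domain: for convex domains the reflection method handles it directly, but in general one must either localize near $\partial\Omega$ and appeal to parabolic boundary regularity, or invoke the full De Giorgi--Nash--Moser machinery adapted to the Neumann problem, as carried out by Winkler in \cite{W}. Once these pointwise kernel bounds are in hand, the remaining steps amount to bookkeeping with Young's convolution inequality, the spectral gap, and duality.
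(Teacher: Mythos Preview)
The paper does not prove this lemma at all; it is quoted verbatim from Winkler \cite{W} and used as a black box. So there is no ``paper's own proof'' to compare against---your sketch stands on its own merits as an attempt to reprove a cited result.

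Your outline for (i), (ii) and (iv) follows the standard route (Gaussian kernel bounds plus spectral gap, then Young and duality) and is essentially how these estimates are established in the literature. The genuine gap is in your argument for (iii). You assert that ``$\nabla$ commutes with $e^{td\Delta}$ under Neumann conditions on $W^{1,p}$,'' but this is false on a bounded domain: if $u(t)=e^{td\Delta}w$ solves the Neumann heat equation, then each component of $\nabla u$ satisfies $\partial_t(\nabla u)=d\Delta(\nabla u)$ in the interior, yet $\nabla u$ does \emph{not} inherit the Neumann boundary condition, so $\nabla e^{td\Delta}w\neq e^{td\Delta}(\nabla w)$ in general. Your subsequent claim that $\nabla w$ is mean-zero is also off: $\int_\Omega \partial_i w=\int_{\partial\Omega} w\,\nu_i$, which has no reason to vanish.

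The actual proof of (iii) in \cite{W} proceeds differently: one first establishes the $p=2$ case directly from the spectral expansion (where $\|\nabla e^{td\Delta}w\|_{L^2}^2=\sum_{k\ge1}\lambda_k e^{-2d\lambda_k t}|\langle w,e_k\rangle|^2\le e^{-2d\lambda_1 t}\|\nabla w\|_{L^2}^2$), and then extends to $p>2$ via a more delicate argument combining the analyticity of the semigroup with interpolation and the already-proved gradient kernel bounds, rather than by any commutation identity. You would need to supply that step to close the argument.
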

	
	\section{Global boundedness of solutions} \label{3}
	In this section, we will establish the global existence of classical solutions to \eqref{model}. To this end, suffice it to show that $ \| u(\cdot,t)\|_{W^{1,\infty}} $, $ \| v(\cdot,t)\|_{W^{1,\infty}}  $ and $ \| w (\cdot,t)\|_{L^\infty} $ are all bounded. We first analyze the equations of prey and primary predator to obtain the boundedness of $ \| v (\cdot,t)\|_{L^\infty} $. Then along the $ L^p $ estimates, $ \| \nabla u(\cdot,t)\|_{L^\infty} $, $ \| \nabla v(\cdot,t)\|_{L^\infty} $ can be derived in light of regularity theory and semigroup method.
	\subsection{Boundedness of $ \| v(\cdot,t)\|_{L^\infty} $} 
	This subsection is devoted to determining the upper bound of $ \| 
	v(\cdot,t)\|_{L^\infty} $. Let us start with two basic lemmas. The bounds 
	of $ \| \nabla u(\cdot,t)\|_{L^2} $ and $ \int_{t}^{t+\tau}\int_{\Omega} 
	|\Delta u(\cdot,s)|^2 $ can be obtained by standard testing procedure. Later the spatio-temporal estimate $  
	\int_{t}^{t+\tau}\int_{\Omega}\frac{|\nabla v(\cdot,s)|^2}{v(\cdot,s)} $ is as 
	a by-product from entropy estimate.
	\begin{lemma}\label{le-K4-K5}
		Let the assumptions in Lemma \ref{LS} hold and $ (u,v,w) $ be the solution of \eqref{model}. Then one has two positive constants $ K_4 $ and $ K_5 $ which are independent of $ t $ such that
		\begin{equation}\label{K4}
			\| \nabla u(\cdot, t) \|_{L^2} \leq K_4\quad \mathrm{for}\ \mathrm{all}\ t\in (0,{T}_{max})
		\end{equation}
	and
		\begin{equation}\label{K5}
			\int_{t}^{t+\tau}\int_{\Omega} |\Delta u(\cdot,s)|^2  \leq K_5\quad 
			\mathrm{for}\ \mathrm{all}\ t\in (0,\widetilde{T}_{max}),
		\end{equation}
		where  $ \tau $ and $ \widetilde{T}_{max} $ are defined in \eqref{tau}.
	\end{lemma}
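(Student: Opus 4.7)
The plan is to test the first equation of \eqref{model} with $-\Delta u$, derive a dissipative $H^1$ identity, and then convert the resulting right-hand side (which is integrable only in spacetime) into pointwise-in-time control by invoking Lemma \ref{inter}.

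First I would multiply the $u$-equation by $-\Delta u$ and integrate over $\Omega$. The Neumann boundary condition turns $\int_\Omega u_t(-\Delta u)$ into $\tfrac{1}{2}\tfrac{d}{dt}\int_\Omega|\nabla u|^2$ after integration by parts, while the diffusion term contributes $+d_1\int_\Omega|\Delta u|^2$ on the other side. The reaction contribution $\int_\Omega(\Delta u)[r_1u(1-u)-b_1uv-b_3uw]$ is then bounded by Young's inequality, using $u\le K_1$ from Lemma \ref{le-K1}, which absorbs half of $d_1\int_\Omega|\Delta u|^2$ and leaves a remainder controlled by $C(1+\int_\Omega v^2+\int_\Omega w^2)$.

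To manufacture a damping term in $y(t):=\int_\Omega|\nabla u(\cdot,t)|^2$, I would use $\int_\Omega|\nabla u|^2=-\int_\Omega u\Delta u\le\varepsilon\int_\Omega|\Delta u|^2+C_\varepsilon$ (Young combined with $u\le K_1$), which allows inserting a linear damping $a\,y(t)$ on the left at the cost of a further small piece of $\int_\Omega|\Delta u|^2$. The upshot is a differential inequality of the form
\[
y'(t)+a\,y(t)+\tfrac{d_1}{4}\int_\Omega|\Delta u(\cdot,t)|^2\le h(t),
\]
with $h(t):=C(1+\|v(\cdot,t)\|_{L^2}^2+\|w(\cdot,t)\|_{L^2}^2)$. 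Since $\sigma\ge 1$, Young's inequality yields $w^2\le 1+w^{\sigma+1}$, so the spatio-temporal estimate \eqref{K3} in Lemma \ref{le-K2-K3} gives $\int_t^{t+\tau}h(s)\,ds\le b$ with $b$ independent of $t$.

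Applying Lemma \ref{inter} to $y$ then yields \eqref{K4}, and integrating the displayed differential inequality over $(t,t+\tau)$ and using the just-established bound on $y$ delivers \eqref{K5}. I do not expect a conceptual obstacle here: the only subtle point is that the term arising from the cross product $(-\Delta u)(b_3 uw)$ puts $\int_\Omega w^2$ on the right-hand side, and this quantity is available only in the spacetime-averaged sense through \eqref{K3}. This forces the argument to route through Lemma \ref{inter} instead of a naive Gronwall step, and it is precisely the reason the slightly supercritical damping exponent $\sigma\ge 1$ enters: it makes $\int_t^{t+\tau}\int_\Omega w^{\sigma+1}$ available to dominate $\int_t^{t+\tau}\int_\Omega w^2$.
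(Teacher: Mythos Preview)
Your proposal is correct and follows essentially the same route as the paper: test the $u$-equation with $-\Delta u$, use $u\le K_1$ and Young's inequality to absorb part of $d_1\int_\Omega|\Delta u|^2$, manufacture the damping term via $\int_\Omega|\nabla u|^2=-\int_\Omega u\,\Delta u$, and then invoke Lemma~\ref{inter} together with \eqref{K3} to close the estimate and integrate over $(t,t+\tau)$ for \eqref{K5}. Your explicit remark that $w^2\le 1+w^{\sigma+1}$ is needed to pass from \eqref{K3} to a bound on $\int_t^{t+\tau}\int_\Omega w^2$ is a detail the paper leaves implicit, but otherwise the arguments coincide.
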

	\begin{proof}
		Multiplying the first equation of \eqref{model} by $ -\Delta u $ and 
		integrating by parts, we observe that
		\begin{align*}
				\frac{1}{2}\frac{d}{dt} \int_{\Omega}|\nabla u|^2 +d_1 
				\int_{\Omega}|\Delta u|^2
				=&-r_1 \int_{\Omega} u(1-u)\Delta u +b_1 \int_{\Omega}uv \Delta 
				u + b_3 \int_{\Omega} uw \Delta u\\
				\le& r_1K_1(1+K_1) \int_{\Omega} |\Delta u| + b_1 K_1 
				\int_{\Omega} v |\Delta u| + b_3K_1 \int_{\Omega} w|\Delta u|\\
				 \le& \frac{d_1}{4} \int_{\Omega} |\Delta u|^2 + \frac{c_1}{2} 
				 \int_{\Omega} v^2 + \frac{c_2}{2} \int_{\Omega} w^2 
				 +\frac{c_3}{2},
		\end{align*}
	where Young's inequality and Lemma \ref{le-K1} are used. From which it follows that
	\begin{equation}\label{B1}
		\frac{d}{dt} \int_{\Omega} |\nabla u|^2 + \frac{3d_1}{2} \int_{\Omega} |\Delta u|^2 \le c_1 \int_{\Omega} v^2 + c_2 \int_{\Omega} w^2 + c_3.
	\end{equation}
Recall that $ \| u\|_{L^\infty} $ is bounded. By Young's inequality again,
\begin{equation*}
	\int_{\Omega} |\nabla u|^2 = \int_{\Omega} \nabla u \cdot \nabla u = - \int_{\Omega} u \Delta u \le \frac{d_1}{2} \int_{\Omega} |\Delta u|^2 + c_4.
\end{equation*}
Upon substituting this into \eqref{B1}, we have
\begin{equation}\label{B2}
	\frac{d}{dt} \int_{\Omega} |\nabla u|^2 +  \int_{\Omega} |\nabla u|^2 + \int_{\Omega} |\Delta u|^2 \le c_1 \int_{\Omega} v^2 + c_2 \int_{\Omega} w^2 + c_5.
\end{equation}
		Then applying Gr\"{o}nwall's inequality to \eqref{B2}, one has for all $ t\in (0,T_{max}) $, 
		\begin{equation*}
			\int_{\Omega} |\nabla u|^2\leq e^{-t}\| \nabla u_0\|_{L^2}+c_1\int_{0}^{t}e^{-(t-s)} \int_{\Omega}u^2 +c_2\int_{0}^{t}e^{-(t-s)} \int_{\Omega} w^{2} + c_5\int_{0}^{t}e^{-(t-s)},
		\end{equation*}
		which gives \eqref{K4} by employing $ u_0\in W^{1,\infty}(\Omega) $ and Lemma \ref{inter}. As a consequence, we integrate \eqref{B2} over $ (t,t+\tau) $ then utilize the analogous argument as in the proof of \eqref{K3} so as to obtain \eqref{K5} immediately.
	\end{proof}
	
	\begin{lemma}\label{le-K6}
		Let the assumptions in Lemma \ref{LS} hold and $ (u,v,w) $ be the solution of \eqref{model}. Then there exists a constant $ K_6>0 $ independent of $ t $ such that 
		\begin{equation}\label{K6}
			 \int_{t}^{t+\tau}\int_{\Omega}\frac{|\nabla 
			 v(\cdot,s)|^2}{v(\cdot,s)} \le K_6 \quad \mathrm{for}\ 
			 \mathrm{all}\ t\in (0,\widetilde{T}_{max}),
		\end{equation}
	where  $ \tau $ and $ \widetilde{T}_{max} $ are defined in \eqref{tau}.
	\end{lemma}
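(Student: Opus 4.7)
The natural route to an entropy-type spacetime bound like \eqref{K6} is to test the $v$-equation by $\ln v$. Since $\partial_t(v\ln v-v)=v_t\ln v$ and $\nabla\ln v=\nabla v/v$, integrating by parts and using the Neumann boundary conditions on both $u$ and $v$ yields the identity
\begin{equation*}
  \frac{d}{dt}\int_{\Omega}(v\ln v-v)+d_2\int_{\Omega}\frac{|\nabla v|^2}{v}=-\xi\int_{\Omega}v\Delta u+\int_{\Omega}(\ln v)\bigl[r_2v(1-v)+uv-b_2vw\bigr].
\end{equation*}
My plan is to control every term on the right by quantities whose $(t,t+\tau)$-integrals are known from Lemmas \ref{le-K1}, \ref{le-K2-K3}, and \ref{le-K4-K5}, and then close the argument by combining the resulting differential inequality with Lemma \ref{inter}.

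For the cross term, Young's inequality gives $-\xi\int_\Omega v\Delta u\le\tfrac{\xi^2}{2}\int_\Omega v^2+\tfrac{1}{2}\int_\Omega|\Delta u|^2$. The decisive observation for the source term is that $(1-v)\ln v\le 0$ for every $v>0$, so $r_2\int_\Omega v(1-v)\ln v\le 0$ and this logistic contribution may simply be discarded. The term $\int_\Omega uv\ln v$ is bounded by $K_1\int_\Omega v^2$ via Lemma \ref{le-K1} and the elementary inequality $v\ln v\le v^2$. For the potentially singular term $-b_2\int_\Omega vw\ln v$, I split into $\{v\le 1\}$ and $\{v>1\}$: on the latter the integrand is nonpositive, while on the former the pointwise bound $v|\ln v|\le e^{-1}$ turns it into a multiple of $\int_\Omega w$, which is already controlled by \eqref{K2}. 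Collecting everything produces
\begin{equation*}
  \frac{d}{dt}\int_\Omega(v\ln v-v)+d_2\int_\Omega\frac{|\nabla v|^2}{v}\le c_1\int_\Omega v^2+\tfrac{1}{2}\int_\Omega|\Delta u|^2+c_2.
\end{equation*}

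To upgrade this differential inequality into \eqref{K6}, I apply the two-dimensional Gagliardo--Nirenberg inequality to $\sqrt v$ together with the $L^1$ bound \eqref{K2} to obtain
\begin{equation*}
  \int_\Omega v^2\le\tfrac{CK_2}{4}\int_\Omega\frac{|\nabla v|^2}{v}+CK_2^2,
\end{equation*}
whence also $\int_\Omega v\ln v\le\int_\Omega v^2$ is dominated by the dissipation, up to constants. Setting $y(t):=\int_\Omega(v\ln v-v)+c_*$ with $c_*:=|\Omega|/e+K_2$ chosen so that $y\ge 0$ (possible since $v\ln v\ge -e^{-1}$ pointwise), this lower bound lets me rewrite the previous inequality in the Gr\"onwall form $y'(t)+ay(t)\le h(t)$ with some $a>0$ and a nonnegative $h$ whose $(t,t+\tau)$-integrals are uniformly bounded thanks to \eqref{K3} and \eqref{K5}. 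Lemma \ref{inter} then delivers a pointwise bound $y(t)\le C$ uniform in $t$. Integrating the entropy inequality once more over $(t,t+\tau)$ and invoking this uniform bound on $y$ together with the spacetime controls \eqref{K3} and \eqref{K5} yields \eqref{K6}.

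The main obstacle is the lack of sign-definiteness of the logarithmic source terms and their singular behavior at $v=0$; in particular the treatment of $-b_2vw\ln v$ is forced by the observation $v|\ln v|\le e^{-1}$ on $(0,1]$, which absorbs the $v\to 0^+$ singularity into the factor $v$ itself. A second subtlety is that a naive integration of the entropy inequality only yields a difference $y(t)-y(t+\tau)$ on the right, so one is forced to first produce a pointwise bound on $y$; the Gagliardo--Nirenberg lower bound of the dissipation by $y$ is precisely what makes this possible.
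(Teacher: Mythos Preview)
Your proof is correct and follows essentially the same entropy argument as the paper: test the $v$-equation by $\ln v$, bound the cross term by Young, control the logarithmic source terms using $v|\ln v|\le e^{-1}$ on $(0,1]$, close a Gr\"onwall-type inequality via Lemma \ref{inter}, and then integrate once more over $(t,t+\tau)$. The only cosmetic differences are that you discard the logistic contribution via the clean sign observation $(1-v)\ln v\le 0$ (the paper instead absorbs it into $|v\ln v|\le v^{3/2}+c$), and you invoke Gagliardo--Nirenberg on $\sqrt v$ to dominate $y$ by the dissipation, whereas the paper simply adds $\int_\Omega v\ln v$ to both sides and bounds it elementarily; both routes land on the same differential inequality with right-hand side controlled by \eqref{K3} and \eqref{K5}.
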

	\begin{proof}
		Noticing $ (v\ln v)_t = v_t\ln v -v_t $, $ v\ln v \ge -\frac{1}{e} $ and multiplying the second equation of \eqref{model} by $ \ln v $, we can obtain
		\begin{align}\label{B3}
			&\frac{d}{dt}\int_{\Omega} v\ln v + d_2 \int_{\Omega}\frac{|\nabla 
			v|^2}{v}\nonumber\\
			&=\xi  \int_{\Omega}\nabla u \cdot \nabla v + r_2\int_{\Omega}  
			v(1-v) \ln v + \int_{\Omega} (uv-b_2vw)\ln v\nonumber\\
			&\leq  \xi \int_{\Omega} v|\Delta u| +(r_2+K_1) \int_{\Omega} v\ln 
			v +\frac{K_2(r_2+b_2)}{e} \nonumber\\
			&\le c_1 \int_{\Omega} v^2 + c_2 \int_{\Omega} |\Delta 
			u|^2+(r_2+K_1) \int_{\Omega} |v\ln v| 
			+\frac{K_2(r_2+b_2)}{e},		
	    \end{align}
where Young's inequality, Lemma \ref{le-K1} and Lemma \ref{le-K2-K3} have been used. Adding $ \int_{\Omega} v\ln v $ at both sides of \eqref{B3} and combining with the fact $ |v\ln v| \le v^{\frac{3}{2}}+c_3 $, one has
\begin{equation*}
		\frac{d}{dt} \int_{\Omega} v\ln v +  \int_{\Omega} v\ln v +d_2 \int_{\Omega}\frac{|\nabla v|^2}{v} \le c_1 \int_{\Omega} v^2 + c_2 \int_{\Omega} |\Delta u|^2+c_4.
\end{equation*}
With the help of \eqref{K3} and \eqref{K5}, we can first get $ \| v\ln v\|_{L^1}\le c_6 $. Then it is evident that \eqref{K6} is right.
	\end{proof}

	We will now discuss two $ L^p $ estimates with respect to $ v $ and $ \nabla u $ which play a fundamental role in our further proof on account of the flexible application of semigroup theory and energy estimates.
	\begin{lemma}\label{le-K7}
		Let the assumptions in Lemma \ref{LS} hold, then the solution of \eqref{model} satisfies
		\begin{equation}\label{K7}
			\| v(\cdot,t) \|_{L^p} \leq K_7(p)\quad \mathrm{for}\ \mathrm{all}\ t\in (0,{T}_{max}),
		\end{equation}
		where $ p\in [1,\infty) $ and $ K_7>0 $ is a constant independent of $ t $.
	\end{lemma}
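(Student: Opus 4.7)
The plan is to test the second equation of \eqref{model} with $pv^{p-1}$ and exploit the interplay between the diffusion dissipation $\|\nabla v^{p/2}\|_{L^2}^2$, the super-quadratic absorption $\int_\Omega v^{p+1}$, and the spatio-temporal control on $\|\Delta u\|_{L^2}^2$ from Lemma \ref{le-K4-K5}. Multiplying by $pv^{p-1}$, integrating by parts on the diffusion and on the prey-taxis term so that $-p\int_\Omega v^{p-1}\nabla\cdot(\xi v\nabla u)=-(p-1)\xi\int_\Omega v^p\Delta u$, using $\|u\|_{L^\infty}\leq K_1$ from Lemma \ref{le-K1} and discarding $-pb_2\int_\Omega v^pw\leq 0$, I obtain
\[
\frac{d}{dt}\int_\Omega v^p+\frac{4(p-1)d_2}{p}\int_\Omega|\nabla v^{p/2}|^2+pr_2\int_\Omega v^{p+1}\leq (p-1)\xi\int_\Omega v^p|\Delta u|+p(r_2+K_1)\int_\Omega v^p.
\]

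The critical task is to control the cross term $\int_\Omega v^p|\Delta u|$. By Cauchy-Schwarz and the two-dimensional Ladyzhenskaya-Gagliardo-Nirenberg inequality applied to $f=v^{p/2}$,
\[
\int_\Omega v^p|\Delta u|\leq \|v\|_{L^{2p}}^p\|\Delta u\|_{L^2}\leq C\|\nabla v^{p/2}\|_{L^2}\Bigl(\int_\Omega v^p\Bigr)^{1/2}\|\Delta u\|_{L^2}+C\Bigl(\int_\Omega v^p\Bigr)\|\Delta u\|_{L^2}.
\]
Young's inequality absorbs the gradient factor into $\|\nabla v^{p/2}\|_{L^2}^2$ on the left, while the 2D Gagliardo-Nirenberg interpolation $\int_\Omega v^p\leq C\|\nabla v^{p/2}\|_{L^2}^{2-2/p}+C$—made available by the $L^1$ bound $\|v\|_{L^1}\leq K_2$ from Lemma \ref{le-K2-K3}—produces a linear damping term proportional to $\int_\Omega v^p$. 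Combining with $\int_\Omega v^p\leq \varepsilon\int_\Omega v^{p+1}+C(\varepsilon)$ to absorb the remaining low-order contributions into the super-quadratic dissipation, I reduce the inequality to the form
\[
\frac{d}{dt}\int_\Omega v^p+c_1\int_\Omega v^p\leq c_2\Bigl(\int_\Omega v^p\Bigr)\|\Delta u(\cdot,t)\|_{L^2}^2+c_3.
\]

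The conclusion is then obtained by a Gr\"onwall-type bootstrap on sliding intervals of length $\tau$ together with Lemma \ref{inter} and the spatio-temporal estimate $\int_t^{t+\tau}\|\Delta u(\cdot,s)\|_{L^2}^2\,ds\leq K_5$ from Lemma \ref{le-K4-K5}: the linear damping on the left-hand side, paired with the local integrability of $\|\Delta u\|_{L^2}^2$, delivers the uniform bound $\int_\Omega v^p\leq K_7(p)$. The main obstacle is exactly this final closure: because $\|\Delta u(\cdot,t)\|_{L^2}^2$ is only integrable on bounded time windows rather than pointwise bounded, a naive Gr\"onwall argument would yield only exponential-in-$t$ estimates; the essential trick is to pair the super-quadratic absorption $\int_\Omega v^{p+1}$ with the 2D Ladyzhenskaya inequality in order to manufacture enough linear damping $c_1\int_\Omega v^p$ on the left to match the sliding-window integrability of $\|\Delta u\|_{L^2}^2$ encoded in Lemma \ref{inter}.
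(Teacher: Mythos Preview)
Your approach is essentially correct but takes a genuinely different route from the paper. The paper does not manufacture any linear damping: it stops at
\[
\frac{d}{dt}\|v\|_{L^p}^p\le c_6\bigl(\|\Delta u\|_{L^2}^2+1\bigr)\|v\|_{L^p}^p,
\]
and then invokes Lemma~\ref{le-K6} (the entropy estimate $\int_t^{t+\tau}\int_\Omega|\nabla v|^2/v\le K_6$) together with the two-dimensional Sobolev embedding applied to $\sqrt v$ to obtain $\int_t^{t+\tau}\|v(\cdot,s)\|_{L^p}\,ds\le C$; this produces, in every window, a good starting time $t_\star$ with $\int_\Omega v^p(\cdot,t_\star)$ bounded, after which a plain Gr\"onwall on $(t_\star,t)$ closes. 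Your argument bypasses Lemma~\ref{le-K6} entirely by exploiting the super-quadratic absorption $\int_\Omega v^{p+1}$ (and the $L^1$-based interpolation for $v^{p/2}$) to generate damping, which is more self-contained; the paper's route is more modular and makes the final ODE step trivial.

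One point in your closure needs sharpening. Lemma~\ref{inter} applies to $y'+ay\le h(t)$ with $h$ independent of $y$, whereas your reduced inequality still carries the multiplicative term $c_2\|\Delta u\|_{L^2}^2\,y$ on the right, so Lemma~\ref{inter} is not literally applicable. The argument does close, but you must use explicitly that $c_1$ is \emph{at your disposal}: since $c_1\int_\Omega v^p\le \tfrac{pr_2}{2}\int_\Omega v^{p+1}+C(c_1)$ for every $c_1>0$, you may choose $c_1>c_2K_5/\tau$. Then integrating $y'\le(c_2g-c_1)y+c_3$ over successive intervals of length $\tau$ yields $y((n{+}1)\tau)\le \rho\,y(n\tau)+C$ with $\rho=e^{c_2K_5-c_1\tau}<1$, and iteration gives the uniform bound. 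Without making this largeness of $c_1$ explicit, the step is a genuine (if easily fixable) gap: a fixed damping constant need not dominate a merely window-integrable multiplicative coefficient.
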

	\begin{proof}
		Firstly, multiplying the first equation of \eqref{model} by $ v^{p-1}$ with $ p>2 $ and integrating it by parts, we have
		\begin{align}\label{B4}
				&\frac{1}{p}\frac{d}{dt}\int_{\Omega} v^p 
				+d_2(p-1)\int_{\Omega} v^{p-2}|\nabla u|^2 \nonumber\\
				&=\xi(p-1)\int_{\Omega}v^{p-1}\nabla u\cdot \nabla v + 
				\int_{\Omega} v^p(r_2-r_2v+u-b_2w)\nonumber\\
				&\leq \frac{\xi(p-1)}{p}\int_{\Omega} \nabla (v^p)\cdot \nabla 
				u +(r_2+K_1)\int_{\Omega} v^p- r_2 
				\int_{\Omega}v^{p+1}\nonumber\\
				&\leq \frac{\xi(p-1)}{p}\int_{\Omega} |v^p \Delta u| 
				+(r_2+K_1)\int_{\Omega} v^p.
		\end{align}
		With the help of the Gagliardo-Nirenberg inequality $ \| v^{\frac{p}{2}} \|_{L^4}^2\leq c_1 \| \nabla v^{\frac{p}{2}} \|_{L^2}\| v^{\frac{p}{2}} \|_{L^2} +c_2\| v^{\frac{p}{2}} \|_{L^2}^2 $ and the H\"{o}lder's inequality, we derive that
		\begin{equation}\label{B5}
			\|v^p\Delta u \|_{L^1}\leq \| \Delta u\| _{L^2}\| v^{\frac{p}{2}} \|_{L^4}^2 \leq \| \Delta u \|_{L^2}(c_1\| \nabla v^{\frac{p}{2}} \|_{L^2}\| v^{\frac{p}{2}} \|_{L^2}+c_2\| v^{\frac{p}{2}}\|_{L^2}^2).
		\end{equation}
		Then substituting \eqref{B5} into \eqref{B4}, one has
		\begin{equation*}
			\begin{aligned}
				&\frac{1}{p}\frac{d}{dt}\| v \|_{L^p}^p +\frac{4d_2(p-1)}{p^2}\| \nabla u^{\frac{p}{2}} \| _{L^2}^2\\
				&\leq \frac{\xi(p-1)}{p}\| \Delta u \|_{L^2}(c_1\| \nabla v^{\frac{p}{2}} \|_{L^2}\| v^{\frac{p}{2}} \|_{L^2}+c_2\| v^{\frac{p}{2}}\|_{L^2}^2)+(r_2+K_1)\| v^{\frac{p}{2}} \|_{L^2}^2\\
				&\leq \frac{2d_2(p-1)}{p^2}\|\nabla v^{\frac{p}{2}}\|_{L^2}^2+c_3\| \Delta u\|_{L^2}^2\| v^{\frac{p}{2}} \|_{L^2}^2+c_4\| \Delta v \|_{L^2}\|u^{\frac{p}{2}}\|_{L^2}^2+(r_2+K_1)\| v^{\frac{p}{2}} \|_{L^2}^2\\
				&\leq \frac{2d_1(p-1)}{p^2}\|\nabla u^{\frac{p}{2}}\|_{L^2}^2+c_5\left( \| \Delta u\|_{L^2}^2+1 \right) \|u^{\frac{p}{2}} \|_{L^2}^2,
			\end{aligned}
		\end{equation*}
		which gives
		\begin{equation}\label{B6}
			\frac{d}{dt} \| v \|_{L^p}^p \leq c_6\left(\| \Delta u \|_{L^2}^2+1  \right) \| v \|_{L^p}^p.
		\end{equation}
		
		In view of \eqref{B6}, \eqref{K7} will be established based on spatio-temporal estimates. Employing \eqref{K2}, \eqref{K6} and Sobolev embedding theorem $ W^{1,2}\hookrightarrow L^{q} $ which is feasible for all $ q\in[1,\infty) $ when $ N=2 $, we have
		\begin{equation}\label{B7}
			\begin{aligned}
				\int_{t}^{t+\tau} \|v \|_{L^p} \le \int_{t}^{t+\tau} \| \sqrt{v}\|_{L^{2p}}^2
				&\leq c_7 \int_{t}^{t+\tau} \left( \| \nabla \sqrt{v}\|_{L^2}^2+\| \sqrt{v}\|_{L^2}^2 \right)\\
				&= c_7 \int_{t}^{t+\tau}\int_{\Omega} \frac{|\nabla v|^2}{v}+c_7\int_{t}^{t+\tau}\int_{\Omega}v\le c_8.
			\end{aligned}
		\end{equation}
		Thus, for any $ t\in (0,T_{max}) $, it is plain to find $ t_\star = t_\star(t)\in ((t-\tau)_{+},t) $ such that $ t_\star\geq 0 $ and 
		\begin{equation}\label{B8}
			\int_{\Omega} v^p(\cdot,t_\star) \leq c_9
		\end{equation}
		on the basis of \eqref{B7}. In addition, \eqref{K5} ensures that
		\begin{equation}\label{B9}
			\int_{t_\star}^{t_\star +\tau}\int_{\Omega} |\Delta u (\cdot,t,s)|^2 \leq 
			c_{10}\quad \mathrm{for}\ \mathrm{all}\ t_\star\in 
			(0,\widetilde{T}_{max}).
		\end{equation}
		
		Finally, applying Gr\"{o}nwall inequality to \eqref{B6} over $ (t_\star, t) $ and then utilizing \eqref{B8} and \eqref{B9}, one has 
		\begin{equation*}
			\int_{\Omega}v^p(\cdot,t) \leq \int_{\Omega} v^p(\cdot,t_\star) \cdot e^{{c_6}\int_{t_\star}^{t}\|\Delta u(\cdot, s)\|_{L^2}^2+1 ds }\leq c_9 e^{c_6\left(c_{10}+1\right)}
		\end{equation*}
		for all $ t\in (0,T_{max}) $, which completes the proof.
	\end{proof}
	\begin{lemma}\label{le-K8}
		Let the assumptions in Lemma \ref{LS} hold. Then for any given $ \sigma \ge 1 $, the solution of \eqref{model} satisfies
		\begin{equation}\label{K8}
			\| \nabla u(\cdot,t) \|_{L^p}\leq K_8(p),\quad \left\{
			\begin{aligned}
				&1\le p<\frac{2\sigma +2}{3-\sigma}, &\mathrm{if}\ &1 \le \sigma <3,\\
				&1\le p < \infty, &\mathrm{if}\ & \sigma \ge 3
			\end{aligned}
			\right.
		\end{equation}
		for all $ t\in (0,T_{max}) $, where $ K_8>0 $ is a constant independent of $ t $.
	\end{lemma}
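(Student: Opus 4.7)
The plan is to represent $u$ via the variation-of-constants formula for its (inhomogeneous) heat equation and then estimate $\nabla u$ in $L^p$ through the Neumann heat semigroup bounds from Lemma \ref{SG}. Writing
\begin{equation*}
u(\cdot,t)=e^{t d_1\Delta}u_0+\int_0^t e^{(t-s)d_1\Delta}\bigl[r_1 u(1-u)-b_1 uv-b_3 uw\bigr](s)\,ds,
\end{equation*}
I would apply $\nabla$, use Lemma \ref{SG}(iii) to bound the first term by $e^{-\lambda_1 d_1 t}\|\nabla u_0\|_{L^p}$, and split the Duhamel integral into the ``easy'' contributions involving $u(1-u)$ and $uv$ on one hand, and the genuinely problematic term $uw$ on the other.

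For the easy contributions, Lemma \ref{le-K1} gives $\|u\|_{L^\infty}\le K_1$ and Lemma \ref{le-K7} yields $\|v(\cdot,s)\|_{L^p}\le K_7(p)$ for every finite $p$, so choosing $q=p$ in Lemma \ref{SG}(ii) produces the weakly singular kernel $(t-s)^{-1/2}e^{-\lambda_1 d_1(t-s)}$, which is integrable uniformly in $t$. The entire difficulty lies in the predator term $uw$, for which the only a priori information available at this stage is the spatio-temporal estimate $\int_t^{t+\tau}\int_\Omega w^{\sigma+1}\le K_3$ from Lemma \ref{le-K2-K3}. To match this, I would take $q=\sigma+1$ in Lemma \ref{SG}(ii) (which requires $p\ge \sigma+1$, the complementary range being handled afterwards by the inclusion $L^{\sigma+1}\hookrightarrow L^p$ on bounded $\Omega$), obtaining
\begin{equation*}
\bigl\|\nabla e^{(t-s)d_1\Delta}(uw)(s)\bigr\|_{L^p}\le \gamma_2 K_1 \bigl(1+(t-s)^{-\beta}\bigr) e^{-\lambda_1 d_1(t-s)}\|w(s)\|_{L^{\sigma+1}},\qquad \beta=\tfrac{1}{2}+\tfrac{1}{\sigma+1}-\tfrac{1}{p}.
\end{equation*}

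The key step is now a Hölder split in time with conjugate exponents $\frac{\sigma+1}{\sigma}$ and $\sigma+1$, namely
\begin{equation*}
\int_0^t\bigl(1+(t-s)^{-\beta}\bigr) e^{-\lambda_1 d_1(t-s)}\|w(s)\|_{L^{\sigma+1}}\,ds\le I_1(t)^{\sigma/(\sigma+1)} I_2(t)^{1/(\sigma+1)},
\end{equation*}
where $I_1(t)=\int_0^t (1+(t-s)^{-\beta})^{(\sigma+1)/\sigma} e^{-\lambda_1 d_1(t-s)}\,ds$ and $I_2(t)=\int_0^t e^{-\lambda_1 d_1(t-s)}\|w(s)\|_{L^{\sigma+1}}^{\sigma+1}\,ds$. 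A direct computation shows $I_1(t)$ is finite uniformly in $t$ precisely when $\tfrac{\sigma+1}{\sigma}\beta<1$, which rearranges to $p(3-\sigma)<2(\sigma+1)$, reproducing the threshold $p<\tfrac{2\sigma+2}{3-\sigma}$ in the regime $\sigma<3$ and giving every finite $p$ once $\sigma\ge 3$. Meanwhile, applying Lemma \ref{inter} to $h(s):=\|w(s)\|_{L^{\sigma+1}}^{\sigma+1}$, whose local-in-time average is controlled by $K_3$ thanks to \eqref{K3}, converts the spatio-temporal bound into the exponentially weighted estimate $I_2(t)\le \tfrac{K_3}{1-e^{-\lambda_1 d_1\tau}}$.

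I expect the main obstacle to be bookkeeping around the Hölder split: one must choose $q=\sigma+1$ so that the time integrability of the singular semigroup factor and the availability of a spatio-temporal $L^{\sigma+1}$-norm on $w$ match up, and then verify that the resulting Lebesgue exponent $p$ saturates exactly the arithmetic threshold $\tfrac{2\sigma+2}{3-\sigma}$. Once this is set up, combining the three Duhamel contributions gives a uniform-in-time $L^p$ bound for $\nabla u$, and the range $p<\sigma+1$ is then recovered for free from Hölder's inequality on the bounded domain $\Omega$, yielding the full claim \eqref{K8}.
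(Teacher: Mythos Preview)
Your proposal is correct and follows precisely the semigroup/Duhamel strategy the paper has in mind: the authors omit the argument here and defer to \cite[Lemma~3.3]{LW}, which proceeds exactly as you describe---Duhamel representation of $u$, smoothing estimate \eqref{SG-2} with $q=\sigma+1$ on the $uw$ contribution, a H\"older split in time with exponents $\tfrac{\sigma+1}{\sigma}$ and $\sigma+1$, and Lemma~\ref{inter} to convert the spatio-temporal bound \eqref{K3} into control of the exponentially weighted integral. Your arithmetic $\tfrac{\sigma+1}{\sigma}\beta<1 \Leftrightarrow p<\tfrac{2\sigma+2}{3-\sigma}$ is exactly the computation that fixes the threshold, and the complementary range $p<\sigma+1$ is indeed recovered by H\"older on the bounded domain (note that for $\sigma=1$ the claim reduces to $p<2$, which is already contained in Lemma~\ref{le-K4-K5}).
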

	\begin{proof}
		The proof is similar as the one in \cite[Lemma 3.3]{LW} based upon 
		delicate semigroup estimates and therefore we omit the detailed information for 
		conciseness. 
	\end{proof}
	Now, we are able to establish the boundedness of $ \| v(\cdot,t) \|_{L^\infty} $ by virtue of the powerful tools Lemma \ref{le-K7} and Lemma \ref{le-K8}.
	\begin{lemma}\label{le-K9}
		Suppose $ \sigma >1 $ and $ (u,v,w) $ is the local classical solution 
		of \eqref{model}. Then one has a constant $ K_{9}>0 $ independent of $ 
		t $ and $ \chi $ such that
		\begin{equation}\label{K9}
			\| v(\cdot, t)\|_{L^\infty} \leq K_{9}\quad \mathrm{for}\ \mathrm{all}\ t\in(0,T_{max}).
		\end{equation}
	\end{lemma}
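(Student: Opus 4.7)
\medskip

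\noindent\textbf{Plan.} My strategy is to upgrade the $L^p$ bounds on $v$ from Lemma~\ref{le-K7} to an $L^\infty$ bound via a variation-of-constants argument on the $v$-equation, exploiting the $L^q$-control of $\nabla u$ with $q>2$ available from Lemma~\ref{le-K8} precisely because $\sigma>1$. The first move is to rewrite the second equation of \eqref{model} as
\begin{equation*}
v_t+v-d_2\Delta v = -\nabla\cdot(\xi v\nabla u)+(1+r_2+u-r_2 v-b_2 w)v,
\end{equation*}
the extra $+v$ on both sides being a cosmetic shift that introduces the decay factor $e^{-(t-s)}$ in Duhamel's formula. This is essential because estimate (i) of Lemma~\ref{SG} carries no exponential decay, so without the shift the Duhamel integral would grow linearly in $t$. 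Using $v,w\ge 0$ and $u\le K_1$, the bracketed reaction is dominated pointwise by $(1+r_2+K_1)v$, and positivity of the Neumann heat kernel allows me to discard the negative contribution $-r_2v-b_2w$ directly without any prior information on $w$.

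\medskip

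Applying Duhamel with the shifted semigroup $e^{-(t-s)}e^{(t-s)d_2\Delta}$ and the pointwise domination above, I obtain the inequality
\begin{equation*}
v(\cdot,t)\le e^{-t}e^{td_2\Delta}v_0-\xi\int_0^t e^{-(t-s)}e^{(t-s)d_2\Delta}\nabla\cdot(v\nabla u)\,ds+(1+r_2+K_1)\int_0^t e^{-(t-s)}e^{(t-s)d_2\Delta}v(\cdot,s)\,ds.
\end{equation*}
Taking $L^\infty$-norms, the initial term is bounded by $\|v_0\|_{L^\infty}$ by $L^\infty$-contractivity of the Neumann heat semigroup. For the cross-term I would fix $q\in(2,\tfrac{2\sigma+2}{3-\sigma})$ when $1<\sigma<3$ (any $q>2$ if $\sigma\ge 3$); H\"older's inequality together with Lemma~\ref{le-K7} and Lemma~\ref{le-K8} furnishes $\|v\nabla u\|_{L^q}\le C$ uniformly in $t$. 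Then Lemma~\ref{SG}(iv) bounds the integrand by $\gamma_4(1+(t-s)^{-\tfrac12-\tfrac1q})e^{-\lambda_1 d_2(t-s)}C$, whose time integral is finite because $\tfrac12+\tfrac1q<1$. For the last term, I pick any $q_1>1$ and apply Lemma~\ref{SG}(i) with Lemma~\ref{le-K7} to get $\|e^{(t-s)d_2\Delta}v(\cdot,s)\|_{L^\infty}\le\gamma_1(1+(t-s)^{-1/q_1})K_7(q_1)$; the extra factor $e^{-(t-s)}$ yields a uniformly bounded time integral. Collecting these estimates produces $\|v(\cdot,t)\|_{L^\infty}\le K_9$ independent of $t$ and of $\chi$.

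\medskip

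\noindent The main obstacle I anticipate is locating an $L^q$-bound for $v\nabla u$ with $q>2$, which is exactly what the hypothesis $\sigma>1$ unlocks: Lemma~\ref{le-K8} then guarantees $\nabla u\in L^{q'}$ for some $q'>2$, and combining it with the uniform $L^p$-bound on $v$ for every finite $p$ (Lemma~\ref{le-K7}) yields via H\"older the needed control. Were one to allow $\sigma=1$, only $\|\nabla u\|_{L^2}$ would be available (Lemma~\ref{le-K4-K5}), the semigroup estimate (iv) would be borderline singular, and this route would break down, consistent with Remark~1.2 in the paper. A secondary delicate point is circumventing the fact that the $L^\infty$-norm of $w$ is not yet known; this is handled cleanly by the pointwise dominance described above, together with the positivity of the Neumann heat kernel, which lets me drop the $-b_2 vw$ term before invoking any semigroup estimate.
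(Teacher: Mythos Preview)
Your proposal is correct and follows essentially the same route as the paper's own proof: shift the $v$-equation by $+v$, apply the variation-of-constants formula, drop the nonpositive reaction terms $-r_2v^2-b_2vw$ by positivity of the Neumann heat semigroup, and then control the divergence term via Lemma~\ref{SG}(iv) using $\|v\nabla u\|_{L^q}$ for some $q>2$, which is available from Lemmas~\ref{le-K7} and~\ref{le-K8} precisely because $\sigma>1$. The only cosmetic differences are that the paper bounds $\|v\nabla u\|_{L^q}$ via Young's inequality rather than H\"older and fixes $q_1=3$ for the source term, but the mechanism is identical.
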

	\begin{proof}
		Due to $ 
		\frac{2\sigma+2}{3-\sigma}>2 $ for $ \sigma>1 $, we remark that there exists a constant $ \tilde{p}>2 $ such that $ 
		\|\nabla u\|_{L^{\tilde{p}}}\le K_{8}(\tilde{p}) $ for given $ 
		\sigma>1 $ by Lemma \ref{le-K8}.
		We rewrite the first equation of \eqref{model} as follows:
		\begin{equation}\label{C1}
			v_t-d_2\Delta v+v = -\nabla \cdot (\xi v\nabla u) + r_2v(1-v)+v+uv-b_2vw.
		\end{equation}
		Applying the variation-of-constants formula to \eqref{C1}, one has
		\begin{equation*}
			\begin{aligned}
				v(\cdot,t) =& e^{t(d_2\Delta-1)} v_0 - \xi \int_{0}^{t} e^{(t-s)(d_2\Delta-1)}\nabla \cdot (v\nabla u) ds\\
				& + r_2\int_{0}^{t}e^{(t-s)(d_2\Delta -1)}v(1-v) ds +  \int_{0}^{t} e^{(t-s)(d_2\Delta -1)}v(1+u-b_2w)ds \\
				\leq & e^{t(d_1\Delta-1)} u_0 + \xi \int_{0}^{t} e^{(t-s)(d_1\Delta-1)}\nabla \cdot (v\nabla u) ds\\
				&+ (r_2+K_1+1)\int_{0}^{t}e^{(t-s)(d_2\Delta -1)}v ds.
			\end{aligned}
		\end{equation*}
		Then, through the standard $ L^p-L^q $ estimates \eqref{SG-1} and \eqref{SG-4} in Lemma \ref{SG} with $ 2<q<\tilde{p} $, we observe that
		\begin{align}\label{C2}
			\| v (\cdot,t)\|_{L^\infty} \leq & \| e^{t(d_2\Delta-1)} v_0 \|_{L^\infty} + \xi \int_{0}^{t} \| e^{(t-s)(d_2\Delta-1)}\nabla \cdot (v\nabla u)\|_{L^\infty} ds \nonumber\\
			& + (r_2+K_1+1)\int_{0}^{t} \| e^{(t-s)(d_2\Delta -1)}v \| _{L^\infty}ds\nonumber\\
			\leq & \| v_0 \|_{L^\infty} + \gamma_4 \xi \int_{0}^{t} \left(1+ (t-s)^{-\frac{1}{2}-\frac{1}{q}}\right)e^{-(\lambda_1 d_2 +1)(t-s)}\| v \nabla u \|_{L^q} ds\nonumber\\
			&+ \gamma_1 (r_2+K_1+1)\int_{0}^{t} \left( 1+ (t-s)^{-\frac{1}{3}}\right) e^{-(t-s)} \| v \|_{L^3}ds .
		\end{align}
		The Young's inequality entails that
		\begin{equation*}
			\| v \nabla u \|_{L^q}\leq \left( \int_{\Omega}  v^{\frac{\tilde{p}q}{\tilde{p}-q}}+\int_{\Omega}|\nabla u|^{\tilde{p}}\right) ^{\frac{1}{q}},
		\end{equation*}
		which in combination with \eqref{K7} and \eqref{K8} leads to
		\begin{equation}
			\gamma_4 \xi \int_{0}^{t} \left(1+ (t-s)^{-\frac{1}{2}-\frac{1}{q}}\right)e^{-(\lambda_1 d_2 +1)(t-s)}\| v \nabla u \|_{L^q} ds \leq c_1 \left(\Gamma\left(\frac{1}{2}-\frac{1}{q}\right)+1\right).
		\end{equation}
		Moreover, since $ \| v (\cdot,t) \|_{L^3} $ is bounded from \eqref{K7} we have
		\begin{equation*}
			\gamma_1 (r_2+K_1+1)\int_{0}^{t} \left( 1+ (t-s)^{-\frac{1}{3}}\right) e^{-(t-s)} \| v \|_{L^3}ds 
			\leq c_2\left( \Gamma\left(\frac{2}{3}\right)+1\right).
		\end{equation*}

		In conclusion, \eqref{K9} is directly obtained from substituting above two estimates into \eqref{C2}, which completes the proof.
	\end{proof}
	
	\subsection{Boundedness of $ \| w(\cdot,t)\|_{L^\infty} $}
	
	In this subsection, we will first show the boundedness of $ \| \nabla v(\cdot,t)\|_{L^2} $ which immediately unfolds the further proof.
	\begin{lemma}\label{le-K10-K11}
		Let $ \sigma>1 $ and $ (u,v,w) $ be the local classical solution of \eqref{model}. Then there exists a constant $ K_{10}>0 $ independent of $ t $ such that 
		\begin{equation}\label{K10}
		 \| \nabla v(\cdot,t) \|_{L^2} \le K_{10} \quad \mathrm{for}\ \mathrm{all}\  t\in (0,T_{max})
		\end{equation}
	and
	\begin{equation}\label{K11}
		\int_{t}^{t+\tau}\int_{\Omega} |\Delta v(\cdot,s)|^2  \leq K_{11}\quad 
		\mathrm{for}\ \mathrm{all}\ t\in (0,\widetilde{T}_{max}),
	\end{equation}
	where  $ \tau $ and $ \widetilde{T}_{max} $ are defined in \eqref{tau}.
	\end{lemma}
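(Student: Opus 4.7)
\medskip
\noindent\textbf{Proof proposal.} The plan is to test the $v$-equation by $-\Delta v$ to produce the dissipation $d_2\int_\Omega |\Delta v|^2$, and then to control the resulting cross-terms by a combination of the uniform bound $\|v\|_{L^\infty}\le K_9$ from Lemma \ref{le-K9}, the super-critical gradient bound $\|\nabla u\|_{L^{\tilde p}}\le K_8(\tilde p)$ with some $\tilde p>2$ furnished by Lemma \ref{le-K8} (this is exactly where the assumption $\sigma>1$ enters, since $\frac{2\sigma+2}{3-\sigma}>2$), and the spatio-temporal bounds of Lemmas \ref{le-K2-K3} and \ref{le-K4-K5}. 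After establishing a differential inequality of the form
\begin{equation*}
  \frac{d}{dt}\int_\Omega |\nabla v|^2 + \int_\Omega |\nabla v|^2 + \frac{d_2}{2}\int_\Omega |\Delta v|^2 \le c_1\int_\Omega |\Delta u|^2 + c_2\int_\Omega w^2 + c_3,
\end{equation*}
both \eqref{K10} and \eqref{K11} will follow from Lemma \ref{inter} and a subsequent time integration over $(t,t+\tau)$.

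Concretely, testing the second equation of \eqref{model} by $-\Delta v$ and integrating by parts yields, on the RHS, a taxis term
\begin{equation*}
  \xi\int_\Omega \nabla\cdot(v\nabla u)\,\Delta v = \xi\int_\Omega v\,\Delta u\,\Delta v + \xi\int_\Omega (\nabla v\cdot\nabla u)\,\Delta v,
\end{equation*}
together with reaction terms $-\int_\Omega[r_2v(1-v)+uv-b_2vw]\Delta v$. The first taxis piece is handled directly by Young's inequality and $\|v\|_{L^\infty}\le K_9$, producing a multiple of $\int_\Omega |\Delta u|^2$, which is spatio-temporally integrable by \eqref{K5}. The reaction piece is controlled, via Young's inequality and $\|u\|_{L^\infty},\|v\|_{L^\infty}\le C$, by a constant plus a multiple of $\int_\Omega w^2$; here I use $\sigma\ge 1$ and \eqref{K3} (together with H\"older) to see that $\int_t^{t+\tau}\!\int_\Omega w^2\le C$.

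The main obstacle is the mixed gradient term $\xi\int_\Omega(\nabla v\cdot\nabla u)\Delta v$, since a priori we have only an $L^2$ bound in prospect for $\nabla v$ and no better than $L^{\tilde p}$ for $\nabla u$. I would estimate it by H\"older's inequality as $\|\nabla u\|_{L^{\tilde p}}\|\nabla v\|_{L^q}\|\Delta v\|_{L^2}$ with $\tfrac{1}{\tilde p}+\tfrac{1}{q}+\tfrac12=1$, and then invoke the two-dimensional Gagliardo--Nirenberg inequality
\begin{equation*}
  \|\nabla v\|_{L^q} \le c\,\|\Delta v\|_{L^2}^{a}\|v\|_{L^\infty}^{1-a} + c\,\|v\|_{L^\infty},\qquad a=1-\tfrac{2}{q}\in(0,1),
\end{equation*}
which is available precisely because $\tilde p>2$ forces $q<\infty$. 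Since $\|v\|_{L^\infty}\le K_9$, this produces a contribution of the form $C\|\Delta v\|_{L^2}^{1+a}+C$ with $1+a<2$, so Young's inequality permits us to absorb it into $\tfrac{d_2}{4}\int_\Omega|\Delta v|^2$ on the LHS at the cost of an additive constant.

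To close, I would add $\int_\Omega|\nabla v|^2$ on both sides, using $\int_\Omega|\nabla v|^2=-\int_\Omega v\Delta v\le \tfrac{d_2}{4}\int_\Omega|\Delta v|^2+C$ to re-absorb. Applying Lemma \ref{inter} with $h(t)=c_1\|\Delta u\|_{L^2}^2+c_2\|w\|_{L^2}^2+c_3$, which is $\tau$-uniformly integrable thanks to \eqref{K5} and \eqref{K3}, yields \eqref{K10}. A subsequent integration of the same differential inequality over $(t,t+\tau)$ then gives \eqref{K11} by the same procedure used at the end of the proof of Lemma \ref{le-K4-K5}.
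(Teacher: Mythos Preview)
Your proposal is correct and follows essentially the same strategy as the paper's proof: test the $v$-equation by $-\Delta v$, split the taxis term, control the critical piece $\xi\int_\Omega(\nabla u\cdot\nabla v)\,\Delta v$ via H\"older with $\|\nabla u\|_{L^{\tilde p}}$ ($\tilde p>2$ from Lemma~\ref{le-K8}) together with a Gagliardo--Nirenberg interpolation, and then close with Lemma~\ref{inter} and the spatio-temporal bounds \eqref{K3}, \eqref{K5}. The only cosmetic difference is that you interpolate $\|\nabla v\|_{L^q}$ between $\|\Delta v\|_{L^2}$ and $\|v\|_{L^\infty}$, whereas the paper interpolates between $\|\Delta v\|_{L^2}$ and $\|\nabla v\|_{L^2}$ and then re-absorbs the resulting $\|\nabla v\|_{L^2}^2$ term; both routes produce the same final differential inequality.
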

	\begin{proof}
	Multiplying the second equation of system \eqref{model} by $ - \Delta v $ 
	and integrating it by parts, one has
	\begin{equation*}
		\begin{aligned}
			&\frac{1}{2}\frac{d}{dt} \int_{\Omega} |\nabla v|^2 +d_2 \int_{\Omega} |\Delta v|^2\\
			&= \int_{\Omega} \nabla \cdot (\xi v\nabla u)\Delta v-r_2 \int_{\Omega} v(1-v)\Delta v - \int_{\Omega} uv\Delta v + b_2 \int_{\Omega} vw \Delta v\\
			&\le \int_{\Omega} \nabla \cdot (\xi v\nabla u)\Delta v+K_9[r_2(1 +K_9)+K_1]\int_{\Omega} |\Delta v|+b_2K_9\int_{\Omega} w|\Delta v|\\
			& \le \xi \int_{\Omega} |\nabla v| | \nabla u| |\Delta v| + \xi K_9\int_{\Omega} |\Delta u||\Delta v|+ c_1 \int_{\Omega} |\Delta v|+c_2 \int_{\Omega}w|\Delta v|\\
			&  \le \xi \int_{\Omega} |\nabla v| | \nabla u| |\Delta v| + \frac{d_2}{4}\int_{\Omega} |\Delta v|^2 + c_3 \int_{\Omega} |\Delta u|^2 + c_4 \int_{\Omega} w^2 +c_5,
		\end{aligned}		
	\end{equation*}
	where we have used Young's inequality and the boundedness of $ \| u\|_{L^\infty} $ and $ \| v\|_{L^\infty} $. 
	
	It remains to deal with the first integral term on the right side of the 
	above inequality. Using the same argument as in Lemma \ref{le-K9}, we 
	choose $ \tilde{p} >2 $ such that $ \| \nabla u\|_{L^p} \le 
	K_8(\tilde{p})  $. Then employing H\"older's inequality, we can derive
	\begin{equation}\label{C3}
		\begin{aligned}
			\xi \int_{\Omega} |\nabla v| | \nabla u| |\Delta v| & \le \xi  \left( \int_{\Omega} |\nabla u|^{\tilde{p}} \right)^{\frac{1}{\tilde{p}}}\left( \int_{\Omega} |\Delta v|^{\frac{\tilde{p}}{\tilde{p}-1}}|\nabla v|^{\frac{\tilde{p}}{\tilde{p}-1}}\right)^{\frac{\tilde{p}-1}{\tilde{p}}}\\
			& \le \xi K_8 \left(\int_{\Omega} |\Delta v|^2\right)^{\frac{1}{2}}\left( \int_{\Omega} |\nabla v|^{\frac{\tilde{p}q}{p-1}} \right)^{\frac{\tilde{p}-1}{\tilde{p}q}},
		\end{aligned}
	\end{equation}
where $ q $ is the conjugate exponent of $ \frac{2(\tilde{p}-1)}{\tilde{p}} $ satisfying $ \frac{\tilde{p}}{2(\tilde{p}-1)}+\frac{1}{q}=1 $. This identity naturally implies
\begin{equation*}
	0<\alpha : = 1-\frac{2(\tilde{p}-1)}{\tilde{p}q}<1.
\end{equation*}
By means of Gagliardo-Nirenberg inequality, one has
 \begin{equation}\label{C4}
 	\|  \nabla v \|_{L^{\frac{\tilde{p}q}{\tilde{p}-1}}} \le c_6 \| \Delta v\|_{L^2} ^{\alpha} \| \nabla v \|_{L^2}^{1-\alpha} +c_7 \| \nabla v \|_{L^2}.
 \end{equation}
 It follows, substituting \eqref{C4} into \eqref{C3}, that
 \begin{align*}
 		\xi \| |\nabla v||\nabla u||\Delta v|\|_{L^1}&\le \xi K_8 \| \Delta v \|_{L^2}(c_6 \| \Delta v\|_{L^2} ^{\alpha} \| \nabla v \|_{L^2}^{1-\alpha} + c_7\| \nabla v \|_{L^2})\\
 		& \le \frac{d_2}{12} \| \Delta v \|_{L^2}^2 + \xi K_8 (c_8+c_7) \| 
 		\Delta v \|_{L^2}\| \nabla v \|_{L^2}\\
 		& \le \frac{d_2}{6} \| \Delta v \|_{L^2}^2  + c_{9} \| \nabla v 
 		\|_{L^2}^2,
 \end{align*}
 which in combination with $ 	\left(c_{9} + \frac{1}{2}\right) \| \nabla v 
 \|_{L^2}^2 \le  \frac{d_2}{12}\| \Delta v \|_{L^2}^2 + c_{10} $ entails the 
 differential inequality
\begin{equation*}
 \frac{d}{dt} \int_{\Omega} |\nabla v|^2 + \int_{\Omega} |\nabla v|^2 + d_2 
 \int_{\Omega} |\Delta v|^2 \le c_{11} \left( 1+\int_{\Omega} |\Delta u|^2 + 
 \int_{\Omega} w^2 \right).
\end{equation*}

Finally, we can manipulate Lemma \ref{inter} with \eqref{K3} and \eqref{K5} to obtain \eqref{K10}, as well as \eqref{K11} by the same argument as in the proof of Lemma \ref{le-K2-K3}.
\end{proof}

With the help of Lemma \ref{le-K10-K11}, we can establish the following 
spatio-temporal estimate.

	\begin{lemma}\label{le-K12}
	Let $ \sigma>1 $ and $ (u,v,w) $ be the local solution of \eqref{model} obtained in Lemma \ref{LS}, one can find a constant $ 
	K_{12}>0 $ independent of $ t $ such that 
	\begin{align}\label{K12}
		\int_{t}^{t+\tau}\int_{\Omega}\frac{|\nabla 
		w(\cdot,s)|^2}{w(\cdot,s)}\le K_{12} \quad \mathrm{for}\ 
		\mathrm{all}\   t\in(0,\widetilde{T}_{max}),
	\end{align}
	where  $ \tau $ and $ \widetilde{T}_{max} $ are defined in \eqref{tau}.
\end{lemma}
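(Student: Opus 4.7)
The plan is to mimic the entropy-estimate strategy of Lemma \ref{le-K6} applied now to the $w$-equation. I would multiply the third equation of \eqref{model} by $\ln w$, integrate by parts using $\partial_\nu u = \partial_\nu v = \partial_\nu w = 0$ on $\partial\Omega$, and use $(w\ln w - w)_t = w_t \ln w$ to arrive at
\begin{equation*}
    \frac{d}{dt}\int_\Omega (w\ln w - w) + \int_\Omega \frac{|\nabla w|^2}{w}
    = \chi\int_\Omega \nabla(uv)\cdot\nabla w + r_3\int_\Omega w(1-w^\sigma)\ln w + \int_\Omega (u+v)w\ln w.
\end{equation*}
Then I would add $\int_\Omega(w\ln w - w) + |\Omega|$ to both sides so that the left-hand side carries a $y'(t)+y(t)$ structure with $y(t):=\int_\Omega(w\ln w - w) + |\Omega|\ge 0$, amenable to Lemma \ref{inter}. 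Using $\|u\|_{L^\infty}\le K_1$, $\|v\|_{L^\infty}\le K_9$, the pointwise bound $|w\ln w| \le w^{\sigma+1} + c$ (valid since $\sigma > 1$), and the sign of $-w^{\sigma+1}\ln w$ on $\{w \ge 1\}$, all source and added terms collapse into a contribution of the form $c(1+\int_\Omega w^{\sigma+1})$, which is spatio-temporally integrable thanks to \eqref{K3}.

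The main obstacle is the alarm-taxis cross term $\chi \int_\Omega \nabla(uv)\cdot \nabla w$, which I split as $\chi \int v\nabla u\cdot \nabla w + \chi \int u\nabla v\cdot \nabla w$ and treat by Young inequalities carrying a $\sqrt w$-weight. The first piece is estimated by
\begin{equation*}
    \chi\int_\Omega v\nabla u\cdot \nabla w \le \tfrac{1}{8}\int_\Omega \frac{|\nabla w|^2}{w} + 2\chi^2 K_9^2 \int_\Omega w|\nabla u|^2,
\end{equation*}
and the residual $\int_\Omega w|\nabla u|^2$ is the delicate one. Since $\sigma > 1$, Lemma \ref{le-K8} supplies some $\tilde p > 2$ with $\|\nabla u\|_{L^{\tilde p}}$ uniformly bounded, so H\"older with the conjugate pair $(\tilde p/(\tilde p - 2),\, \tilde p/2)$ reduces the task to controlling $\|w\|_{L^{\tilde p/(\tilde p-2)}}$. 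A Gagliardo--Nirenberg interpolation applied to $\sqrt w$ in two dimensions, together with $\|\sqrt w\|_{L^2}^2 = \|w\|_{L^1}\le K_2$, then gives
\begin{equation*}
    \|w\|_{L^{\tilde p/(\tilde p-2)}} \le c\bigl(\|\nabla\sqrt w\|_{L^2}^{4/\tilde p} + 1\bigr),
\end{equation*}
and the crucial observation is $4/\tilde p < 2$, so a further Young inequality absorbs this term into a small multiple of $\int_\Omega\frac{|\nabla w|^2}{w} = 4\|\nabla\sqrt w\|_{L^2}^2$.

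For the second piece $\chi \int_\Omega u\nabla v \cdot \nabla w$, I would use $\|u\|_{L^\infty}\le K_1$, H\"older $\int_\Omega w|\nabla v|^2 \le \|w\|_{L^2}\|\nabla v\|_{L^4}^2$, and the Gagliardo--Nirenberg interpolation $\|\nabla v\|_{L^4}^4 \le c\|\nabla v\|_{L^2}^2(\|\Delta v\|_{L^2}^2 + \|\nabla v\|_{L^2}^2)$, so that \eqref{K10}--\eqref{K11} yield $\int_t^{t+\tau}\|\nabla v\|_{L^4}^4 \le c$ uniformly in $t$. A final Young splitting together with the bound $\|w\|_{L^2}^2 \le c\|\nabla\sqrt w\|_{L^2}^2 + c$ absorbs this contribution into $\int_\Omega\frac{|\nabla w|^2}{w}$ up to a spatio-temporally integrable remainder. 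Collecting everything produces a differential inequality of the form
\begin{equation*}
    y'(t) + y(t) + \tfrac14 \int_\Omega \frac{|\nabla w|^2}{w} \le h(t), \qquad \int_t^{t+\tau} h(s)\,ds \le c,
\end{equation*}
so Lemma \ref{inter} first furnishes a uniform bound on $y(t)$, and integrating once more over $(t,t+\tau)$ delivers \eqref{K12}.
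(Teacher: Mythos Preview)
Your argument is correct and reaches the same differential-inequality structure as the paper, but you treat the alarm-taxis cross term $\chi\int_\Omega\nabla(uv)\cdot\nabla w$ by a genuinely different mechanism. The paper integrates by parts once more to obtain $-\chi\int_\Omega w\,\Delta(uv)$, expands $\Delta(uv)=u\Delta v+v\Delta u+2\nabla u\cdot\nabla v$, and then uses Young's inequality to absorb the $w$-factor into $\tfrac{r_3}{4}\int_\Omega w^{\sigma+1}$ (subsequently eaten by the logistic damping hidden in the source terms), leaving $\int_\Omega|\nabla u|^4+\int_\Omega|\nabla v|^4+\int_\Omega|\Delta u|^2+\int_\Omega|\Delta v|^2$ on the right; the quartic gradient terms are reduced via Gagliardo--Nirenberg to $\|\Delta u\|_{L^2}^2$ and $\|\Delta v\|_{L^2}^2$, so only \eqref{K4}, \eqref{K5}, \eqref{K10}, \eqref{K11} are needed. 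Your route instead keeps the gradient form, generates $\int_\Omega w|\nabla u|^2+\int_\Omega w|\nabla v|^2$ by a $\sqrt w$-weighted Young inequality, and absorbs the $w$-factors into the diffusive dissipation $\int_\Omega|\nabla w|^2/w$ through Gagliardo--Nirenberg applied to $\sqrt w$; for the $\nabla u$-piece you additionally invoke the pointwise-in-time bound $\|\nabla u\|_{L^{\tilde p}}\le K_8(\tilde p)$ with $\tilde p>2$ from Lemma~\ref{le-K8}. The paper's version is a bit more economical in its inputs (it does not call on Lemma~\ref{le-K8} at this stage), whereas yours has the minor advantage of never introducing $\Delta u$ explicitly into this estimate.
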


\begin{proof}
	Multiplying the third equation of \eqref{model} and $ \ln w+1 $ together, we can obtain that
	\begin{align}\label{111}
			&\frac{d}{dt}\int_{\Omega}w\ln w+\int_{\Omega}\frac{|\nabla 
				w|^2}{w}\nonumber\\
			&=\chi\int_{\Omega}\nabla 
			w\cdot\nabla(uv)+\int_{\Omega}\left(r_3w\left(1-w^\sigma\right)+w(u+v)\right)
			(\ln 
			w+1)\nonumber\\
			&=: I_1+I_2.
	\end{align}
	Using the boundedness of $ \|\nabla 
	u\|_{L^2} $ and $ \|\nabla v\|_{L^2} $, we can derive by Gagliardo-Nirenberg's 
	inequality that
	\begin{align}\label{lsz11}
		\|\nabla u\|_{L^4}^4\le c_1\left(\|\Delta u\|_{L^2}^{\frac{1}{2}}\|\nabla 
		u\|_{L^2}^{\frac{1}{2}}+\|\nabla u\|_{L^2}\right)^4\le c_2\|\Delta 
		u\|_{L^2}^2+c_2, 
	\end{align}
	similarly,
	\begin{align}\label{lsz22}
		\|\nabla v\|_{L^4}^4\le c_3\|\Delta 
		v\|_{L^2}^2+c_3, 
	\end{align}
	these along with \eqref{K1} and \eqref{K9} yield
	\begin{align}\label{222}
		I_1=&-\chi\int_{\Omega}w\Delta(uv)=-\chi\int_{\Omega}w\nabla\cdot(u\nabla 
		v+v\nabla u)\nonumber\\
		=& -2\chi\int_{\Omega}w\nabla u\cdot\nabla v-\chi\int_{\Omega}wu\Delta 
		v-\chi\int_{\Omega}wv\Delta u\nonumber\\
		\le& 2\chi\int_{\Omega}w|\nabla u||\nabla 
		v|+\chi K_1\int_{\Omega}w|\Delta 
		v|+\chi K_9\int_{\Omega}w|\Delta 
		u|\nonumber\\
		\le& \frac{r_3}{4}\int_{\Omega}w^{\sigma+1}+c_4\int_{\Omega}|\nabla 
		u|^4+c_4\int_{\Omega}|\nabla v|^4+c_4\int_{\Omega}|\Delta 
		u|^2+c_4\int_{\Omega}|\Delta v|^2+c_4\nonumber\\
		\le& \frac{r_3}{4}\int_{\Omega}w^{\sigma+1}+c_5\int_{\Omega}|\Delta 
		u|^2+c_5\int_{\Omega}|\Delta v|^2+c_5.
	\end{align}
	
	On the other hand, noting the fact $ w\ln w\ge -\frac{1}{e} $ and $ |w\ln w|\le 
	w^\frac{3}{2}+c $, one can obtain
	\begin{align}\label{333}
		I_2+\int_{\Omega}w\ln w\le& 
		-r_3\int_{\Omega}w^{\sigma+1}+\frac{r_3}{e}\int_{\Omega}w^\sigma+(r_3+K_1+K_9)\int_{\Omega}w\nonumber\\
		&+(r_3+1+K_1+K_9)\int_{\Omega}|w\ln w|\nonumber\\
		\le& 
		-\frac{3r_3}{4}\int_{\Omega}w^{\sigma+1}+c_6\int_{\Omega}w^{\frac{3}{2}}+
		c_6\nonumber\\
		\le& -\frac{3r_3}{4}\int_{\Omega}w^{\sigma+1}+c_7\|\nabla 
		w^{\frac{1}{2}}\|_{L^2}\|w^{\frac{1}{2}}\|_{L^2}^2+c_7\|w^\frac{1}{2}\|
		_{L^2}^3+c_6\nonumber\\
		\le& 
		-\frac{3r_3}{4}\int_{\Omega}w^{\sigma+1}+\frac{1}{2}\int_{\Omega}\frac{|\nabla
			w|^2}{w}+c_8.
	\end{align}
	Substituting \eqref{222} and \eqref{333} into \eqref{111}, one has
	\begin{align}
		\frac{d}{dt}\int_{\Omega}w\ln w+\int_{\Omega}w\ln w+ 
		\frac{1}{2}\int_{\Omega}\frac{|\nabla
			w|^2}{w}\le 
		c_5\int_{\Omega}|\Delta 
		u|^2+c_5\int_{\Omega}|\Delta v|^2+c_9.
	\end{align}

In conclusion, employing \eqref{K5}, \eqref{K11} and Lemma \ref{inter} again, we can complete this proof.
\end{proof}

Next, the $ L^p $ bound of $ w $ will be obtained.

\begin{lemma}\label{le-K13}
	Assume $ \sigma>1 $ and $ (u,v,w) $ is the local solution of \eqref{model}, then one has
	\begin{align}\label{K13}
		\|w(\cdot,t)\|_{L^p}\le K_{13}(p) \quad \mathrm{for}\ \mathrm{all}\  t\in(0,T_{max}),
	\end{align}
where $ p\in [1, \infty) $ and $ K_{13} $ is a positive constant independent of $ t $.
\end{lemma}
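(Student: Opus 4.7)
The plan is to adapt the approach of Lemma \ref{le-K7} to the $w$-equation. First I would test the third equation of \eqref{model} with $w^{p-1}$ for some fixed $p>2$ (the case $p\le 2$ then follows from any $p>2$ bound by H\"older) and integrate by parts to produce
\[
\frac{1}{p}\frac{d}{dt}\int_{\Omega}w^p+\frac{4(p-1)}{p^2}\int_{\Omega}|\nabla w^{p/2}|^2+r_3\int_{\Omega}w^{p+\sigma}=\frac{\chi(p-1)}{p}\int_{\Omega}\nabla(w^p)\cdot\nabla(uv)+\int_{\Omega}w^p(r_3+u+v).
\]
The last summand is $\le c\|w\|_{L^p}^p$ thanks to Lemmas \ref{le-K1} and \ref{le-K9}, so the real task is to shape the chemotactic contribution into an expression linear in $\|w\|_{L^p}^p$ with a time-integrable prefactor.

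I would split $\nabla(uv)=u\nabla v+v\nabla u$ and use $\nabla(w^p)=2w^{p/2}\nabla w^{p/2}$, so Cauchy--Schwarz and H\"older reduce each resulting term to an expression of the form $\|\nabla w^{p/2}\|_{L^2}\|\nabla v\|_{L^4}\|w^{p/2}\|_{L^4}$ (and its $u$-counterpart). The two-dimensional Gagliardo--Nirenberg inequality gives $\|w^{p/2}\|_{L^4}^2\le c(\|\nabla w^{p/2}\|_{L^2}\|w^{p/2}\|_{L^2}+\|w^{p/2}\|_{L^2}^2)$, while \eqref{lsz11} and \eqref{lsz22} (which rest on $\|\nabla u\|_{L^2}\le K_4$ and $\|\nabla v\|_{L^2}\le K_{10}$ from Lemmas \ref{le-K4-K5} and \ref{le-K10-K11}) produce $\|\nabla u\|_{L^4}^2\le c(\|\Delta u\|_{L^2}+1)$ and the analogous bound for $v$. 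Applying Young's inequality twice --- first to absorb a small multiple of $\|\nabla w^{p/2}\|_{L^2}^2$ into the diffusion, and then via $ab\le\tfrac{1}{2}(a^2+1)b^2$ to separate the $\|\Delta u\|_{L^2}$ and $\|\Delta v\|_{L^2}$ factors from $\|w^{p/2}\|_{L^2}^2=\|w\|_{L^p}^p$ --- I would arrive at the ODE-type inequality
\[
\frac{d}{dt}\|w\|_{L^p}^p\le c\bigl(1+\|\Delta u\|_{L^2}^2+\|\Delta v\|_{L^2}^2\bigr)\|w\|_{L^p}^p+c.
\]

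To close the Gr\"onwall argument I still need a starting time at which $\|w\|_{L^p}^p$ is already under control. Rewriting \eqref{K12} as $\int_t^{t+\tau}\|\nabla\sqrt{w}\|_{L^2}^2\,ds\le\tfrac{1}{4}K_{12}$ and combining it with $\|w\|_{L^1}\le K_2$, the Sobolev embedding $W^{1,2}(\Omega)\hookrightarrow L^{2p}(\Omega)$, valid for every finite $p$ in two dimensions, yields $\int_t^{t+\tau}\|w(\cdot,s)\|_{L^p}\,ds\le c$ exactly as in \eqref{B7}; picking $t_\star\in((t-\tau)_+,t)$ where the integrand does not exceed its mean gives a uniform bound on $\|w(\cdot,t_\star)\|_{L^p}^p$. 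Integrating the displayed ODE over $(t_\star,t)$ and invoking \eqref{K5} and \eqref{K11} to control $\int_{t_\star}^{t_\star+\tau}(\|\Delta u\|_{L^2}^2+\|\Delta v\|_{L^2}^2)\,ds$ then delivers \eqref{K13}. The genuinely new difficulty compared with Lemma \ref{le-K7} is the simultaneous occurrence of $\nabla u$ and $\nabla v$ in the flux $\nabla(uv)$, which obliges one to exploit \emph{both} Lemma \ref{le-K4-K5} and Lemma \ref{le-K10-K11}; without the $\|\Delta v\|_{L^2}^2$ spatio-temporal bound just secured in Lemma \ref{le-K10-K11}, the prefactor on $\|w\|_{L^p}^p$ would not be integrable in time.
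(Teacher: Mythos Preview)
Your proposal is correct and follows essentially the same route as the paper. The only cosmetic difference is that the paper integrates the chemotactic term by parts once more, writing $\int_\Omega\nabla(w^p)\cdot\nabla(uv)=-\int_\Omega w^p\Delta(uv)$ and then expanding $\Delta(uv)=u\Delta v+v\Delta u+2\nabla u\cdot\nabla v$ into three pieces, whereas you stay with the first-order form $\nabla(uv)=u\nabla v+v\nabla u$ and estimate two pieces; both variants rely on the same Gagliardo--Nirenberg interpolation for $w^{p/2}$, the bounds \eqref{lsz11}--\eqref{lsz22}, and the spatio-temporal controls \eqref{K5}, \eqref{K11}, \eqref{K12}, and both close via the identical Gr\"onwall argument with a well-chosen $t_\star$.
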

\begin{proof}
	Using the third equation of \eqref{model} and noting the boundedness of $ 
	\|u\|_{L^\infty} $ and $ \|v\|_{L^\infty} $, one has that for all $ p>2 $
	\begin{align*}
		\frac{1}{p}\frac{d}{dt}\int_{\Omega}w^p=& 
		\int_{\Omega}w^{p-1}w_t\nonumber\\
		=& \int_{\Omega}w^{p-1}[\Delta w - \nabla \cdot (\chi w\nabla (uv))+r_3 
		w(1-w^\sigma)+vw+uw]\nonumber\\
		\le& -(p-1)\int_{\Omega}w^{p-2}|\nabla 
		w|^2-\frac{\chi(p-1)}{p}\int_{\Omega}w^p\Delta(uv)-\frac{r_3}{2}\int_{\Omega}
		w^{p+\sigma}+c_1,
	\end{align*}
	which means
	\begin{align}\label{lll1}
		&\frac{d}{dt}\int_{\Omega}w^p+ p(p-1)\int_{\Omega}w^{p-2}|\nabla w|^2+ 
		\frac{pr_3}{2}\int_{\Omega}
		w^{p+\sigma}\nonumber\\
		\le& -\chi(p-1)\int_{\Omega}w^p(u\Delta v+v\Delta u+2\nabla u\cdot\nabla 
		v)\nonumber\\
		\le& c_2\int_{\Omega}w^p|\Delta u|+c_2\int_{\Omega}w^p|\Delta 
		v|+c_2\int_{\Omega}w^p|\nabla u||\nabla v|.
	\end{align}
	With the help of the Gagliardo-Nirenberg inequality 
	\begin{align*}
		\|w^{\frac{p}{2}}\|_{L^4}\le c_3\left(\|\nabla 
		w^{\frac{p}{2}}\|_{L^2}^\frac{1}{2}\|w^{\frac{p}{2}}\|_{L^2}^\frac{1}{2}
		+\|w^{\frac{p}{2}}\|_{L^2}\right),
	\end{align*}
	we can derive
	\begin{align}\label{lll2}
		c_2\|w^p\Delta u\|_{L^1}\le& c_2\|\Delta 
		u\|_{L^2}\|w^{\frac{p}{2}}\|_{L^4}^2\nonumber\\
		\le& c_4\|\Delta u\|_{L^2}\left(\|\nabla 
		w^{\frac{p}{2}}\|_{L^2}\|w^{\frac{p}{2}}\|_{L^2}
		+\|w^{\frac{p}{2}}\|_{L^2}^2\right)\nonumber\\
		\le& \frac{p-1}{p}\|\nabla 
		w^{\frac{p}{2}}\|_{L^2}^2+c_5\left(\|\Delta 
		u\|_{L^2}^2+1\right)\|w^{\frac{p}{2}}\|_{L^2}^2,
	\end{align}
	similarly,
	\begin{align}\label{lll3}
		c_2\|w^p\Delta v\|_{L^1}\le \frac{p-1}{p}\|\nabla 
		w^{\frac{p}{2}}\|_{L^2}^2+c_6\left(\|\Delta 
		v\|_{L^2}^2+1\right)\|w^{\frac{p}{2}}\|_{L^2}^2.
	\end{align}
	Noting \eqref{lsz11} and \eqref{lsz22}, one can obtain
	\begin{align}\label{lll4}
		c_2\int_{\Omega}w^p|\nabla u||\nabla v|\le & c_2\|w^{\frac{p}{2}}\|_{L^4}^2
		\|\nabla u\|_{L^4}\|\nabla v\|_{L^4}\nonumber\\
		\le& \frac{c_2}{2}\|w^{\frac{p}{2}}\|_{L^4}^2\left(\|\nabla 
		u\|_{L^4}^2+\|\nabla v\|_{L^4}^2\right)\nonumber\\
		\le& c_7\left(\|\nabla 
		w^{\frac{p}{2}}\|_{L^2}\|w^{\frac{p}{2}}\|_{L^2}
		+\|w^{\frac{p}{2}}\|_{L^2}^2\right)\left(\|\Delta u\|_{L^2}+\|\Delta 
		v\|_{L^2}+1\right)\nonumber\\
		\le& \frac{2(p-1)}{p}\|\nabla 
		w^{\frac{p}{2}}\|_{L^2}^2+c_8\left(\|\Delta u\|_{L^2}^2+\|\Delta 
		v\|_{L^2}^2+1\right)\|w^{\frac{p}{2}}\|_{L^2}^2.
	\end{align}

	Substituting \eqref{lll2} \eqref{lll3} and \eqref{lll4} into \eqref{lll1}, we 
	get
	\begin{align}\label{lll5}
		\frac{d}{dt}\|w\|_{L^p}^p\le c_9\left(\|\Delta u\|_{L^2}^2+\|\Delta 
		v\|_{L^2}^2+1\right)\|w\|_{L^p}^p
	\end{align}
and reach \eqref{K13} by analogous method of Lemma \ref{le-K7}.
	Noticing
	\begin{align*}
		\int_{t}^{t+\tau} \|w \|_{L^p}& \le \int_{t}^{t+\tau} \| 
		\sqrt{w}\|_{L^{2p}}^2\nonumber\\
		&\leq c_{10} \int_{t}^{t+\tau} \left( \| \nabla \sqrt{w}\|_{L^2}^2+\| 
		\sqrt{w}\|_{L^2}^2 \right)\nonumber\\
		&= c_{10} \int_{t}^{t+\tau}\int_{\Omega} \frac{|\nabla 
			w|^2}{w}+c_{10}\int_{t}^{t+\tau}\int_{\Omega}w\nonumber\\
		&\leq c_{11},
	\end{align*}
	where we have used \eqref{K2} and \eqref{K12}. Thus, for any $ t\in 
	(0,T_{max}) $, we can find $ t^\star = t^\star(t)\in ((t-\tau)_{+},t) $ such that $ 
	t^\star\geq 0 $ and
	\begin{equation}\label{lll6}
		\int_{\Omega} w^p(\cdot,t^\star) \leq c_{12}.
	\end{equation}
	
	Finally, applying Gr\"{o}nwall inequality to \eqref{lll5} over $ (t^\star, t) $ and
	then utilizing \eqref{lll6}, one has
	\begin{equation*}
		\int_{\Omega}w^p(\cdot,t) \leq \int_{\Omega} w^p(\cdot,t^\star) \cdot 
		e^{{c_9}\int_{t^\star}^{t}\left(\|\Delta v(\cdot, s)\|_{L^2}^2+\|\Delta 
			v(\cdot, s)\|_{L^2}^2+1\right) ds }\leq c_{12} e^{c_6\left(c_{13}+1\right)}.
	\end{equation*}
	The proof of this lemma is completed.
\end{proof}

Now, we can derive the boundedness of $ \| \nabla u(\cdot,t)\|_{L^\infty} $, $ \| \nabla v(\cdot,t)\|_{L^\infty} $ and $ \| w(\cdot,t)\|_{L^\infty} $ in next two lemmas.

	\begin{lemma}\label{le-K14}
	Suppose $ \sigma>1$ and $ (u,v,w) $ is the local classical solution of \eqref{model}. Then one has
	\begin{equation}\label{K14}
		\| \nabla u(\cdot,t)\|_{L^\infty} +\| \nabla v(\cdot,t) \|_{L^\infty}\le K_{14}\quad \mathrm{for}\ \mathrm{all}\ t\in(0,T_{max}),
	\end{equation}
	where $ K_{14}>0 $ is a constant independent of $ t $.
\end{lemma}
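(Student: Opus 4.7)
My plan is to bound $\nabla u$ and $\nabla v$ successively via the variation-of-constants formula together with the Neumann heat semigroup estimates of Lemma \ref{SG}. For $\nabla u$, since the $u$-equation carries no taxis drift, I apply $\nabla$ to
$u(t)=e^{td_1\Delta}u_0+\int_0^t e^{(t-s)d_1\Delta}[r_1u(1-u)-b_1uv-b_3uw](s)\,ds$
and invoke Lemma \ref{SG}(ii) with $p=\infty$ and some fixed $q\in(2,\infty)$. The smoothing factor $(1+(t-s)^{-\frac{1}{2}-\frac{1}{q}})e^{-\lambda_1 d_1(t-s)}$ is integrable in $s$, and by Lemmas \ref{le-K1}, \ref{le-K9} and \ref{le-K13} the bracketed source is bounded in $L^q$ uniformly in time; hence $\|\nabla u(\cdot,t)\|_{L^\infty}\le C$ for all $t\in(0,T_{max})$. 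This $L^\infty$-bound will then feed into the estimate for $v$.

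\emph{Intermediate stage ($\nabla v$ in $L^p$).} Applying $\nabla$ to the Duhamel formula for $v$, the zeroth-order contribution from $F_2:=r_2v(1-v)+uv-b_2vw$ is treated exactly as in the preceding estimate (this quantity lies in $L^q$ for every finite $q$ by Lemmas \ref{le-K9} and \ref{le-K13}). For the taxis contribution
$I(t):=\int_0^t\nabla e^{(t-s)d_2\Delta}\nabla\cdot(v\nabla u)(s)\,ds$,
observe that $v\nabla u\in L^\infty$ thanks to the first stage and Lemma \ref{le-K9}. Splitting
$\nabla e^{\tau d_2\Delta}\nabla\cdot=\nabla e^{\tau d_2\Delta/2}\circ e^{\tau d_2\Delta/2}\nabla\cdot$
and combining Lemma \ref{SG}(ii) with Lemma \ref{SG}(iv) produces a map $L^q\to L^p$ with combined temporal singularity of order $\tau^{-1-\frac{1}{q}+\frac{1}{p}}$, which is integrable provided $q>p$. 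Choosing $q$ sufficiently larger than $p$ yields $\|\nabla v(\cdot,t)\|_{L^p}\le C(p)$ uniformly in $t$ for every finite $p$.

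\emph{Upgrade to $L^\infty$ and main obstacle.} The core difficulty is that letting $p\to\infty$ in the previous splitting forces the combined singularity back to the borderline $\tau^{-1}$, which is no longer integrable, so the $L^\infty$-bound for $\nabla v$ does not follow from a one-shot semigroup argument in the critical two-dimensional scaling. To close the estimate I rewrite
$\nabla\cdot(v\nabla u)=v\Delta u+\nabla u\cdot\nabla v$
and appeal to parabolic $L^p$ maximal regularity applied to the $u$-equation, whose right-hand side is uniformly bounded in $L^p$ for every finite $p$; this places $\Delta u$ in $L^p$ uniformly in time. Combined with $\nabla u\in L^\infty$ from the first stage and the $L^p$-gradient bound on $v$ from the intermediate stage, the scalar $\nabla\cdot(v\nabla u)$ then lies in $L^p$ for every finite $p$, and a final application of Lemma \ref{SG}(ii) to $\nabla e^{(t-s)d_2\Delta}$ acting on this scalar source with $p>2$ produces the integrable singularity $(t-s)^{-\frac{1}{2}-\frac{1}{p}}$ and closes the desired $L^\infty$ bound on $\nabla v$.
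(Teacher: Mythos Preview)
Your proposal contains two genuine gaps, both concerning the taxis contribution.

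\textbf{The intermediate stage does not work.} The splitting
$\nabla e^{\tau d_2\Delta}\nabla\cdot=\nabla e^{\tau d_2\Delta/2}\circ e^{\tau d_2\Delta/2}\nabla\cdot$
combined with Lemma~\ref{SG}(ii),(iv) only yields an $L^q\to L^p$ bound when $q\le p$: both estimates in Lemma~\ref{SG} are \emph{smoothing} estimates and are stated solely for $q\le p$. In that regime the combined singularity is $\tau^{-1-(1/q-1/p)}$ with $1/q-1/p\ge 0$, hence \emph{worse} than $\tau^{-1}$, not better. Taking $q>p$ gives you nothing: on a bounded domain the inclusion $L^q\hookrightarrow L^p$ is free and does not reduce the singularity, so the best you ever get is $\tau^{-1}$, which is non-integrable. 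Consequently you have not established any $L^p$ gradient bound on $v$ beyond the $L^2$ bound of Lemma~\ref{le-K10-K11}, and the final stage cannot rely on it.

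\textbf{Maximal regularity does not give what you claim.} Parabolic $L^p$ maximal regularity applied to the $u$-equation yields the \emph{space--time} estimate
$\int_t^{t+\tau}\|\Delta u(\cdot,s)\|_{L^p}^p\,ds\le C$,
not the uniform-in-time bound $\sup_t\|\Delta u(\cdot,t)\|_{L^p}\le C$ that you invoke. Since the source $r_1u(1-u)-b_1uv-b_3uw$ contains $w$, which at this point is only known to lie in $L^p$ for finite $p$ (Lemma~\ref{le-K13}) and not in $L^\infty$, there is no route to a pointwise-in-time $L^p$ bound on $\Delta u$.

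The paper closes the argument precisely by exploiting the space--time structure you have suppressed: it first records (via a maximal-regularity lemma from \cite{CJ}) that $\int_t^{t+\tau}\|\Delta u\|_{L^6}^6\,ds\le C$, then writes the Duhamel formula for $\nabla v$ only over the short window $(t-\tau,t)$, expands $\nabla\cdot(v\nabla u)=\nabla u\cdot\nabla v+v\Delta u$, and treats the term $\int_{t-\tau}^t(1+(t-s)^{-2/3})\|v\Delta u\|_{L^6}\,ds$ by Young's inequality in the time variable with conjugate exponents $(6/5,6)$, so that $\int_{t-\tau}^t\|\Delta u\|_{L^6}^6\,ds$ absorbs the second-order piece. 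Your write-up never appeals to any such temporal H\"older splitting; inserting it is exactly the missing step.
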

\begin{proof}
	First of all, we introduce the standard parabolic regularity theory (see \cite{HM,KS}): Let $ T\in (0,\infty] $ and suppose $ z\in C^0(\bar \Omega \times [0,T)\cap C^{2,1}(\bar \Omega \times (0,T)) $ is a solution of
	\begin{equation*}
		\left\{
		\begin{aligned}
			&z_t = \Delta z-z+g, & x&\in \Omega, t\in (0,T),\\
			&\frac{\partial z}{\partial \nu} =0, & x&\in \partial \Omega, t\in (0,T),
		\end{aligned}
		\right.
	\end{equation*}
where $ g(t) \in L^p(\Omega) $ is bounded for all $ t\in (0,T) $. Then there exists a constant $ C>0 $ such that

	\begin{equation*}
	\|  z(\cdot,t) \|_{W^{1,r}} \leq C,\quad  r\in \left\{
	\begin{aligned}
		&[1,\frac{Np}{N-p}),  &\mathrm{if}\ &p\le N,\\
		&[1,\infty], &\mathrm{if}\ & p>N.
	\end{aligned}
	\right.
\end{equation*}

	With this result in hand, we can rewrite the first equation of \eqref{model} as $ u_t =: d_1\Delta u -u +F(u,v,w) $, where $ F(u,v,w) = u(r_1+1-r_1u-b_1v-b_3w) $. Noting $ \| u\|_{L^\infty} $ and $ \| v\|_{L^\infty} $ are bounded, $ \| w\|_{L^3} $ is bounded as well according to Lemma \ref{le-K13}. Hence we can derive the boundedness of $ \| \nabla u \|_{L^\infty} $. 
	
	Next, we need an auxiliary regularity result before we conduct the estimate of $ \| \nabla u\|_{L^\infty}$. Using Lemma 2.4 in \cite{CJ}, $ u_0\in W^{2,\infty}(\Omega) $ and the fact that $ \| w\|_{L^6} $ has the uniform-in-time bound, we derive
	\begin{equation}\label{Delta u}
		\int_{t}^{t+\tau} \| \Delta u \|_{L^6}^6  ds \le c_1\quad \mathrm{for}\ \mathrm{all}\ t\in(0,\widetilde{T}_{max}).
	\end{equation}
	Then the boundedness of $ \| \nabla v\|_{L^\infty} $ can be obtained via semi-group tool. In fact, we just consider the variation-of-constants formula to the second equation of \eqref{model} over $ ((t-\tau),t) $ when $ t>\tau $. One has
	\begin{equation*}
		\begin{aligned}
			\nabla v(\cdot,t)=& \nabla e^{d_2\tau \Delta}v(\cdot,t-\tau)+\xi \int_{t-\tau}^{t} \nabla e^{d_2 (t-s)\Delta} \nabla \cdot (v\nabla u) ds\\
			&+r_2 \int_{t-\tau}^{t} \nabla e^{d_2 (t-s)\Delta} v(1-v)ds+ \int_{t-\tau}^{t} \nabla e^{d_2 (t-s)\Delta} (uv-b_2vw) ds\\
		\end{aligned}
	\end{equation*}
	for all $ t\in (\tau,T_{max}) $, which together with \eqref{SG-2} implies
	\begin{align}\label{v}
	&	\| \nabla v(\cdot,t)\|_{L^\infty}\nonumber\\
		\le&  \| \nabla e^{d_1\tau \Delta}v(\cdot,t-\tau)\|_{L^\infty}+ \xi \int_{t-\tau}^{t} \| \nabla e^{d_2 (t-s)\Delta} \nabla \cdot (v\nabla u) \|_{L^\infty}ds\nonumber\\
		&+ \int_{t-\tau}^{t} \| \nabla e^{d_2 (t-s)\Delta} [r_2v(1-v)+uv] \|_{L^\infty}ds +b_2 \int_{t-\tau}^{t}\| \nabla e^{d_2(t-s)\Delta}vw \|_{L^\infty}ds\nonumber \\
		\le& \| \nabla v_0 \|_{L^\infty}+ \xi \int_{t-\tau}^{t} \| \nabla e^{d_2(t-s)\Delta}\nabla u \cdot \nabla v \|_{L^\infty}ds+\xi \int_{t-\tau}^{t} \| \nabla e^{d_2(t-s)\Delta}v\Delta u \|_{L^\infty}ds\nonumber\\
		&+  \gamma_2 \int_{t-\tau}^{t} \left(1+(t-s)^{-\frac{1}{2}}\right)e^{-\lambda_1 d_2(t-s)}(r_2K_9(1+K_9)+K_1K_9) ds\nonumber\\
		&+ \gamma_2K_9 \int_{t-\tau}^{t} \left(1+(t-s)^{-\frac{1}{2}-\frac{1}{6}}\right)e^{-\lambda_1 d_2(t-s)} \| w \|_{L^6}ds \nonumber\\
		\le & \xi \int_{t-\tau}^{t} \| \nabla e^{d_2(t-s)\Delta}\nabla u \cdot \nabla v \|_{L^\infty}ds+\xi \int_{t-\tau}^{t} \| \nabla e^{d_2(t-s)\Delta}v\Delta u \|_{L^\infty}ds+ c_2
	\end{align}
	for all $ t\in(\tau,T_{max}) $. Now let us deal with the right-hand side of the last inequality with the help of \eqref{Delta u} and gradient estimates, it follows that
	\begin{equation*}
		\begin{aligned}
			&\xi \int_{t-\tau}^{t} \| \nabla e^{d_2(t-s)\Delta}\nabla u \cdot \nabla v \|_{L^\infty}ds\\
			&= \xi \gamma_2 \int_{t-\tau}^{t} \left(1+(t-s)^{-\frac{1}{2}-\frac{1}{3}}\right) e^{-\lambda_1 d_2 (t-s)} \| \nabla u \cdot \nabla v \|_{L^3} ds\\
			&\le \xi \gamma_2 \int_{t-\tau}^{t} \left(1+(t-s)^{-\frac{5}{6}}\right)e^{-\lambda_1 d_2 (t-s)}  \| \nabla v \|_{L^2} \| \nabla u \|_{L^6} ds\\
			& \le c_3 \left(\Gamma \left(\frac{1}{6}\right)+1\right),
		\end{aligned}
	\end{equation*}
and
	\begin{equation*}
		\begin{aligned}
			&\xi \int_{t-\tau}^{t} \| \nabla e^{d_2(t-s)\Delta}v\Delta u \|_{L^\infty}ds \\
			&=\xi \gamma_2 \int_{t-\tau}^{t} (1+(t-s)^{-\frac{1}{2}-\frac{1}{6}})e^{-\lambda_1 d_1 (t-s)}\| v \Delta u \|_{L^6}ds\\
			&\le  (\xi \gamma_2 K_1)^{\frac{6}{5}} \int_{t-\tau}^{t} (1+(t-s)^{-\frac{2}{3}})^{\frac{6}{5}}e^{-\frac{6\lambda_1 d_1 }{5}(t-s)}ds + \int_{t-\tau}^{t} \| \Delta v \|_{L^6}^6 ds\\
			&\le c_4 \left(\Gamma\left(\frac{1}{5}\right)+1\right)+c_1\quad \mathrm{for}\ \mathrm{all}\ t\in(\tau,T_{max}).
		\end{aligned}
	\end{equation*}
	
	At last, placing above two inequalities into \eqref{v} yields $ \| \nabla v\|_{L^\infty}\le c_{10} $, which in combination previous discussion ensures \eqref{K14}.
\end{proof}

\begin{lemma}\label{le-K15}
	Assume $ \sigma>1 $. Then there exists a constant $ K_{15}>0 $ independent of $ t $ such that the local classical solution $ (u,v,w) $ of \eqref{model} satisfies
	\begin{equation}\label{K15}
		\| w(\cdot,t) \|_{L^\infty} \leq K_{15}\quad \mathrm{for}\ \mathrm{all}\  t\in (0,T_{max}).
	\end{equation}
\end{lemma}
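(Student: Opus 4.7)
The plan is to bound $ \|w(\cdot,t)\|_{L^\infty} $ via a Duhamel/semigroup argument entirely parallel to the proof of Lemma~\ref{le-K9}. Rewriting the third equation of~\eqref{model} in the form
\begin{equation*}
	w_t - \Delta w + w = -\chi \nabla \cdot \bigl(w\nabla(uv)\bigr) + (1+r_3+u+v)w - r_3 w^{\sigma+1},
\end{equation*}
the variation-of-constants formula gives
\begin{equation*}
	\begin{aligned}
		w(\cdot,t) =&\ e^{t(\Delta-1)} w_0 - \chi \int_{0}^{t} e^{(t-s)(\Delta-1)} \nabla \cdot \bigl(w\nabla(uv)\bigr)\, ds \\
		&+ \int_{0}^{t} e^{(t-s)(\Delta-1)} \bigl[(1+r_3+u+v)w - r_3 w^{\sigma+1}\bigr]\, ds.
	\end{aligned}
\end{equation*}

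Passing to $ L^\infty $-norms, the first summand is controlled by $ \|w_0\|_{L^\infty} $. For the divergence term, Lemma~\ref{le-K14} gives $ \|\nabla u\|_{L^\infty}, \|\nabla v\|_{L^\infty} \le K_{14} $, which combined with $ \|u\|_{L^\infty} \le K_1 $ and $ \|v\|_{L^\infty} \le K_9 $ makes $ \|\nabla(uv)\|_{L^\infty} \le K_1 K_{14} + K_9 K_{14} $ uniformly bounded. Together with $ \|w\|_{L^3} \le K_{13}(3) $ from Lemma~\ref{le-K13}, applying~\eqref{SG-4} with $ q=3 $ yields
\begin{equation*}
	\chi \int_{0}^{t} \|e^{(t-s)(\Delta-1)} \nabla \cdot (w\nabla(uv))\|_{L^\infty}\, ds \le c_1 \int_{0}^{t} \bigl(1+(t-s)^{-5/6}\bigr) e^{-(t-s)}\, ds,
\end{equation*}
which is uniformly bounded in $ t $ since $ 5/6 < 1 $. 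For the non-divergence source, both $ (1+r_3+u+v)w $ and $ w^{\sigma+1} $ lie uniformly in every $ L^q(\Omega) $ by Lemma~\ref{le-K13} (using $ \|w^{\sigma+1}\|_{L^q} = \|w\|_{L^{(\sigma+1)q}}^{\sigma+1} $), so~\eqref{SG-1} with any fixed $ q > 1 $ produces a kernel of the form $ (1+(t-s)^{-1/q}) e^{-(t-s)} $, which is likewise integrable on $ (0,\infty) $.

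Summing the three contributions yields a bound on $ \|w(\cdot,t)\|_{L^\infty} $ uniform in $ t\in(0,T_{max}) $. The argument is essentially mechanical once Lemmas~\ref{le-K13} and~\ref{le-K14} are in hand; the one mild subtlety I anticipate is that the damping term $ -r_3 w^{\sigma+1} $ cannot be simply discarded by a comparison principle (the drift-diffusion-taxis semigroup does not cleanly preserve the sign of an added negative source once the $ \chi $-flux is present), so I keep it inside the integrand and absorb it through the higher $ L^q $-bounds of $ w $. This is exactly where the standing assumption $ \sigma > 1 $, inherited through Lemma~\ref{le-K13}, ultimately pays off.
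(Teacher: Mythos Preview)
Your argument is correct and follows essentially the same semigroup route as the paper: variation-of-constants with $e^{t(\Delta-1)}$, estimate~\eqref{SG-4} on the divergence term via $\|w\|_{L^3}$ together with the $L^\infty$ bounds on $u,v,\nabla u,\nabla v$ from Lemmas~\ref{le-K1}, \ref{le-K9}, \ref{le-K14}, and estimate~\eqref{SG-1} on the zero-order source.

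The one point worth noting is your handling of $-r_3 w^{\sigma+1}$. In the Duhamel representation the only semigroup appearing is the Neumann heat semigroup $e^{t(\Delta-1)}$; the taxis flux sits entirely in the inhomogeneity. Since $e^{t(\Delta-1)}$ is order-preserving, one may simply drop the nonpositive contribution $-r_3 w^{\sigma+1}$ pointwise inside the source integral before taking the $L^\infty$ norm, exactly as was done with $-r_2 v^2$ and $-b_2 vw$ in the proof of Lemma~\ref{le-K9}. This is what the paper does: it bounds the source by $w(r_3+u+v)$ and estimates only that. Your alternative of keeping the term and absorbing it through $\|w\|_{L^{(\sigma+1)q}}^{\sigma+1}$ is also valid, but the extra work is not needed; your stated worry about the ``drift-diffusion-taxis semigroup'' is a slight misconception, since no such semigroup enters the Duhamel formula.
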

\begin{proof}
By the variation of constants formula, $ w  $ can be represented as
\begin{equation*}
	\begin{aligned}
		w(\cdot,t)=&e^{(\Delta -1)t}w_0-\chi\int_0^te^{(\Delta-1)(t-s)}\nabla \cdot ( uw\nabla v+vw\nabla u)ds\\
		&+ \int_0^te^{(\Delta-1)(t-s)}[r_3 w(1-w^\sigma)+vw+uw]ds.
	\end{aligned}
\end{equation*}
Applying the semigroup smoothing estimates \eqref{SG-1} and \eqref{SG-4}, one can
obtain
\begin{align*}
		\|w(\cdot,t)\|_{L^\infty}\le & \|e^{(\Delta 
			-1)t}w_0\|_{L^\infty}+\chi\int_0^t\|e^{(\Delta-1)(t-s)}\nabla \cdot ( 
		uw\nabla v+vw\nabla u)\|_{L^\infty}ds\nonumber\\
		&+ 
		\int_0^t\|e^{(\Delta-1)(t-s)}w(r_3+v+u)\|_{L^\infty}ds\nonumber\\
		\le& \|w_0\|_{L^\infty}+ 
		\chi\gamma_4\int_{0}^t\left(1+(t-s)^{-\frac{5}{6}}\right)
		e^{-(\lambda_1+1)(t-s)}\|w(u\nabla v+v\nabla u)\|_{L^3}ds\nonumber\\
		&+ \gamma_1\int_{0}^t\left(1+(t-s)^{-\frac{1}{3}}\right)e^{-(t-s)}\|
		w(r_3+v+u)\|_{L^3}ds\nonumber\\
		\le& \|w_0\|_{L^\infty}+\chi\gamma_4\left(\|u\|_{L^\infty}\|\nabla 
		v\|_{L^\infty}+\|v\|_{L^\infty}\|\nabla u\|_{L^\infty}\right)\|w\|_{L^3}
		\left(\Gamma\left(\frac{1}{6}\right)+1\right)\nonumber\\
		&+ 
		\gamma_1\|w\|_{L^3}\left(r_3+\|u\|_{L^\infty}+\|v\|_{L^\infty}\right)
		\left(\Gamma\left(\frac{2}{3}\right)+1\right)\nonumber\\
		\le& c_1, \quad \text{ for all } t\in(0,T_{max}),
\end{align*}
where we have used Lemma \ref{le-K1}, Lemma \ref{le-K9}, Lemma \ref{le-K14} and 
Lemma \ref{le-K15}. The proof is completed.
\end{proof}

\begin{proof}[Proof of Theorem \ref{GS}]
	Lemma \ref{le-K1} and Lemma \ref{le-K9} in combination with Lemma \ref{le-K14} show that 
	\begin{equation*}
		\|u(\cdot,t)\|_{W^{1,\infty}}+	\|v(\cdot,t)\|_{W^{1,\infty}} \le c_1\quad \mathrm{for}\ \mathrm{all}\ t\in(0,T_{max}).
	\end{equation*}
	From this and Lemma \ref{le-K15}, we can find a constant $ c_2 $ such that
	\begin{equation*}
		\| u(\cdot,t)\|_{W^{1,\infty}} +\| v(\cdot,t)\|_{W^{1,\infty}}+ 	\|w(\cdot,t)\|_{L^\infty}\le c_2
	\end{equation*}
	for all $ t\in(0,T_{max}) $, which alongside with the extension criterion in Lemma \ref{LS} proves Theorem \ref{GS}.
\end{proof}
	
	\section{Global stability of solutions} \label{4}
	This section is dedicated to the global stability of classical solutions to 
	\eqref{model}. We shall prove the three-species coexistence steady state $ 
	(u_*,v_*,w_*) $ defined by \eqref{bbb} is global stable and show that the 
	convergence rate is exponential under some conditions . To this end, we 
	introduce the following energy functional:
	\begin{equation}\label{4.1}
		\mathcal{E}(t):= \mathcal{E}(u,v,w) = \frac{1}{b_3}\mathcal{E}_u (t) +\frac{1}{b_2}\mathcal{E}_v(t)+\mathcal{E}_w(t),
	\end{equation}
where
\begin{equation*}
	\mathcal{E}_{l} (t) := \int_{\Omega} l-l_*-l_* \ln \frac{l}{l_*}, \quad l = u,v,w.
\end{equation*}

\begin{lemma}\label{lemma 4.1}
	Let the assumptions in Theorem \ref{GST} hold. If 
	\begin{align}\label{4.2222}
		(b_1b_2-b_3)^2<4b_2b_3,
	\end{align}
	then there exist $ \chi_1>0 $ and $ \xi_1>0 $ such that whenever $ 
	\chi\in(0,\chi_1) $ and $ \xi\in(0,\xi_1) $ 
	\begin{equation}
		\|u-u_*\|_{L^2}+\|v-v_*\|_{L^2}+\|w-w_*\|_{L^2}\le C_3e^{-C_4t}, \quad 
		\mathrm{for}\ \mathrm{all}\  t>t_0,
	\end{equation} 
	where $ t_0>1 $ and $ C_3,C_4>0 $ are constants independent of $ t $. 
\end{lemma}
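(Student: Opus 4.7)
The plan is to differentiate the weighted entropy $\mathcal{E}(t)$ along trajectories, observe an algebraic cancellation in the reaction part that is forced by the specific weights $1/b_3,1/b_2,1$ in \eqref{4.1}, absorb the taxis cross-terms into the diffusion dissipation for small $\chi,\xi$, and then close an exponential Gronwall inequality. Writing $U:=u-u_*$, $V:=v-v_*$, $W:=w-w_*$ and using the steady-state relations in \eqref{SS} to rewrite the kinetic sources as $-U-b_1V-b_3W$, $U-V-b_2W$ and $U+V-(w^\sigma-w_*^\sigma)$ respectively, integration by parts against the test multipliers $1-l_*/l$ produces, for each equation, a diffusion dissipation of the form $-c_l\int_\Omega|\nabla l|^2/l^2$ (with $c_u=d_1u_*$, $c_v=d_2v_*$, $c_w=w_*$), a taxis flux $\xi v_*\int_\Omega\nabla u\cdot\nabla v/v$ from the $v$-equation or $\chi w_*\int_\Omega\nabla w\cdot\nabla(uv)/w$ from the $w$-equation, and a reaction quadratic expression in $(U,V,W)$.

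Upon forming $\mathcal{E}'(t)=\tfrac{1}{b_3}\mathcal{E}_u'+\tfrac{1}{b_2}\mathcal{E}_v'+\mathcal{E}_w'$, the coefficients of the $UW$ and $VW$ cross-reactions cancel identically, leaving the reaction part
\[
-\frac{1}{b_3}\int_\Omega U^{2}-\frac{1}{b_2}\int_\Omega V^{2}+\frac{b_3-b_1b_2}{b_2b_3}\int_\Omega UV-\int_\Omega(w-w_*)(w^\sigma-w_*^\sigma).
\]
The determinant of the $(U,V)$-form equals $\bigl[4b_2b_3-(b_3-b_1b_2)^2\bigr]/(4b_2^2b_3^2)$, which is positive by \eqref{4.2222}; consequently the $(U,V)$-contribution is at most $-c_0\int_\Omega(U^2+V^2)$ for some $c_0>0$, while $\sigma\ge 1$ gives $(w-w_*)(w^\sigma-w_*^\sigma)\ge\sigma\min(w,w_*)^{\sigma-1}W^2\ge 0$.

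To tame the taxis fluxes I would write $\nabla l=l\cdot(\nabla l/l)$, invoke the $L^\infty$-bounds of Theorem \ref{GS} and apply Young's inequality to obtain
\[
\Big|\frac{\xi v_*}{b_2}\!\int_\Omega\!\frac{\nabla u\cdot\nabla v}{v}\Big|+\Big|\chi w_*\!\int_\Omega\!\frac{\nabla w\cdot\nabla(uv)}{w}\Big|\le C(\xi+\chi)\sum_{l\in\{u,v,w\}}\int_\Omega\frac{|\nabla l|^{2}}{l^{2}}.
\]
Choosing $\chi_1,\xi_1$ small enough that $C(\chi_1+\xi_1)\le\tfrac{1}{2}\min\{c_u,c_v,c_w\}$ absorbs these terms into the diffusion dissipation and gives
\[
\frac{d}{dt}\mathcal{E}(t)\le -c_0\!\int_\Omega(U^2+V^2)-\!\int_\Omega(w-w_*)(w^\sigma-w_*^\sigma)\qquad\text{for all }\chi<\chi_1,\ \xi<\xi_1.
\]

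It remains to convert this into $\mathcal{E}'(t)\le-2C_4\mathcal{E}(t)$ and invoke Gronwall; the elementary pointwise estimate $l-l_*-l_*\ln(l/l_*)\le\tfrac{1}{2\underline l}(l-l_*)^2$ whenever $l\ge\underline l>0$ furnishes the upper bound for $\mathcal{E}(t)$ in terms of $\|U\|_{L^2}^2+\|V\|_{L^2}^2+\|W\|_{L^2}^2$, while the reverse bound is immediate from the $L^\infty$-bounds of Theorem \ref{GS}. The principal obstacle is producing a uniform positive lower bound for $u,v,w$, which I would supply by a two-stage bootstrap: integrating the dissipation inequality first yields $\int_{0}^{\infty}(\|U\|_{L^2}^2+\|V\|_{L^2}^2+\|W\|_{L^2}^2)\,dt<\infty$, and then the uniform $W^{1,\infty}$-regularity of Theorem \ref{GS}, together with a standard uniform-continuity argument, forces $\|u-u_*\|_{L^\infty}+\|v-v_*\|_{L^\infty}+\|w-w_*\|_{L^\infty}\to 0$ as $t\to\infty$; from some $t_0>1$ onwards one therefore has $u\ge u_*/2,\,v\ge v_*/2,\,w\ge w_*/2$, at which point restarting the Lyapunov computation on $[t_0,\infty)$ with these lower bounds produces $\mathcal{E}'(t)\le-2C_4\mathcal{E}(t)$, and Gronwall delivers the claimed $L^2$ exponential decay.
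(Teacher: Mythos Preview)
Your strategy matches the paper's: differentiate the weighted entropy, use the steady-state relations to see the $UW$ and $VW$ couplings cancel, check positive-definiteness of the $(U,V)$-form via \eqref{4.2222}, and absorb the taxis fluxes for small $\chi,\xi$ (the paper packages the absorption as positive-definiteness of a $3\times3$ matrix $B$ in $(\nabla u/u,\nabla v/v,\nabla w/w)$, but the content is the same). Two technical points are handled more carefully in the paper and are actual gaps in your write-up. First, for the $w$-contribution the paper invokes $(a-b)(a^q-b^q)\ge\max\{a,b\}^{q-1}(a-b)^2$, which yields $(w-w_*)(w^\sigma-w_*^\sigma)\ge w_*^{\sigma-1}W^2$ \emph{without} any lower bound on $w$, giving $\mathcal{E}'\le-c_2(\|U\|_{L^2}^2+\|V\|_{L^2}^2+\|W\|_{L^2}^2)$ immediately; your $\min$-version is too weak to deliver the claim $\int_0^\infty\|W\|_{L^2}^2\,dt<\infty$, since $\min(w,w_*)^{\sigma-1}$ may vanish. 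Second, your absorption constant $C$ depends on $\|u\|_{L^\infty}$ and $\|v\|_{L^\infty}$, and you must avoid circularity when choosing $\chi_1,\xi_1$: the paper explicitly notes that $\|u\|_{L^\infty}$ is independent of $\xi,\chi$ (Lemma~\ref{le-K1}) and $\|v\|_{L^\infty}$ is independent of $\chi$ (Lemma~\ref{le-K9}), which is exactly what makes the smallness condition well-posed. Your final bootstrap via uniform regularity and $L^\infty$ convergence to obtain lower bounds and close the exponential Gronwall loop is what the paper outsources to \cite[Lemma~4.3]{JWW}.
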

\begin{proof}
	Using the first equation of system \eqref{model}, one can derive
	\begin{align}\label{4.2}
			\frac{d}{dt} \mathcal{E}_u(t) =&\int_{\Omega} 
			\left(1-\frac{u_*}{u}\right)u_t\nonumber\\
			=&-d_1 u_* \int_{\Omega} \frac{|\nabla u|^2}{u^2} + \int_{\Omega} 
			(u-u_*)(1-u-b_1v-b_3w)\nonumber\\
			=& -d_1 u_* \int_{\Omega}\frac{|\nabla u|^2}{u^2}+ \int_{\Omega} 
			(u-u_*)(u_*-u+1-u_*-b_1v-b_3w)\nonumber\\
			=& -d_1 u_* \int_{\Omega}\frac{|\nabla u|^2}{u^2}-\int_{\Omega} 
			(u-u_*)^2-b_1\int_{\Omega} (u-u_*)(v-v_*)\nonumber\\
			 &-b_3 \int_{\Omega} (u-u_*)(w-w_*),
	\end{align}
	where we have used the fact $ 1-u_* = b_1 v_* + b_3 w_* $ and the remaining steady state equations will also be exploited in next two calculations.
	
	Next, we employ the second and third equations of \eqref{model} to obtain that
	\begin{align}\label{4.3}
			\frac{d}{dt} \mathcal{E}_v(t) =& \int_{\Omega} 
			\left(1-\frac{v_*}{v}\right)v_t\nonumber\\
			=&-d_2 v_* \int_{\Omega} \frac{|\nabla v|^2}{v^2} + \xi v_* 
			\int_{\Omega} \frac{\nabla u \cdot \nabla v}{v} + \int_{\Omega} 
			(v-v_*)(1-v+u-b_2 w)\nonumber\\
			=& -d_2 v_* \int_{\Omega} \frac{|\nabla v|^2}{v^2} + \xi v_* 
			\int_{\Omega} \frac{\nabla u \cdot \nabla v}{v} + \int_{\Omega} 
			(v-v_*)(v_*-v+1-v_*+u-b_2 w)\nonumber\\
			=& -d_2 v_* \int_{\Omega} \frac{|\nabla v|^2}{v^2} + \xi v_* 
			\int_{\Omega} \frac{\nabla u \cdot \nabla v}{v} -\int_{\Omega} 
			(v-v_*)^2\nonumber\\
			&+ \int_{\Omega} (u-u_*)(v-v_*) -b_2 \int_{\Omega}  (v-v_*)(w-w_*)
	\end{align}
	and
	\begin{align}\label{4.4}
			\frac{d}{dt} \mathcal{E}_w(t) = & 
			\int_{\Omega}\left(1-\frac{w_*}{w}\right)w_t\nonumber\\ 
			= & -w_* \int_{\Omega} \frac{|\nabla w|^2}{w^2} + \chi w_* 
			\int_{\Omega} \frac{v \nabla u \cdot \nabla w}{w} + \chi w_* 
			\int_{\Omega} \frac{u \nabla v \cdot \nabla w}{w} \nonumber\\
			&+ \int_{\Omega} 
			(w-w_*)(w_*^\sigma-w^\sigma+1-w_*^\sigma+v+u)\nonumber\\
			= &-w_* \int_{\Omega} \frac{|\nabla w|^2}{w^2} + \chi w_* 
			\int_{\Omega} \frac{v \nabla u \cdot \nabla w}{w} + \chi w_* 
			\int_{\Omega} \frac{u \nabla v \cdot \nabla w}{w} \nonumber\\
			&-\int_{\Omega} (w-w_*)(w^\sigma-w_*^\sigma)+ \int_{\Omega} 
			(u-u_*)(w-w_*)+\int_{\Omega} (v-v_*)(w-w_*).
	\end{align}
	
	Then, substituting \eqref{4.2} \eqref{4.3} and \eqref{4.4} into \eqref{4.1}, we 
	obtain
	\begin{equation}\label{4.5}
		\begin{aligned}
			\frac{d}{dt}\mathcal{E}(t)+ \int_{\Omega} (w-w_*)(w^\sigma-w_*^\sigma)= 
			-\int_{\Omega}XAX^T-\int_{\Omega}YBY^T,
		\end{aligned}
	\end{equation}
	where $ X:= (u-u_*,v-v_*) $, $ Y:= (\frac{\nabla u}{u},\frac{\nabla 
		v}{v},\frac{\nabla w}{w}) $ and $ A,B $ are symmetric matrices defined by
	
	\begin{equation*}
		A:=
		\begin{pmatrix}
			\frac{1}{b_3} & \frac{b_1b_2-b_3}{2b_2b_3} \\
			\frac{b_1b_2-b_3}{2b_2b_3} & \frac{1}{b_2}	
		\end{pmatrix},
		\quad B:= 
		\begin{pmatrix}
			\frac{d_1u_*}{b_3} & -\frac{\xi v_*u}{2b_2} & -\frac{\chi w_*uv}{2} \\
			-\frac{\xi v_*u}{2b_2} & \frac{d_2v_*}{b_2} & -\frac{\chi w_*uv}{2} \\
			-\frac{\chi w_*uv}{2} & -\frac{\chi w_*uv}{2} & w_*
		\end{pmatrix}.
	\end{equation*}
	
	We show that $ A $ and $ B $ are positive definite. In fact,
	\begin{equation*}
		\begin{aligned}
			|A|=\frac{1}{b_2b_3}-\frac{(b_1b_2-b_3)^2}{4b_2^2b_3^2}= 
			\frac{4b_2b_3-(b_1b_2-b_3)^2}{4b_2^2b_3^2}>0
		\end{aligned}
	\end{equation*}
	by means of \eqref{4.2222}.  Therefore, $ A $ is positive definite and there 
	exists one constant $ 
	c_1>0 $ such that
	\begin{align}\label{4.6}
		XAX^T\ge c_1|X|^2.
	\end{align}
	On the other hand, one can find constants $ \xi_1>0 $ and $ \chi_1>0 $, such 
	that for all $ \xi\in(0,\xi_1) $ and $ \chi\in(0,\chi_1) $
	\begin{align*}
		\left|
		\begin{array}{cc}
			\frac{d_1u_*}{b_3} & -\frac{\xi v_*u}{2b_2} \\
			-\frac{\xi v_*u}{2b_2} & \frac{d_2v_*}{b_2}
		\end{array}
		\right|= \frac{v_*(4d_1d_2b_2u_*-\xi^2v_*u^2)}{4b_2^2}\ge 
		\frac{v_*(4d_1d_2b_2u_*-\xi^2v_*\|u\|_{L^\infty}^2)}{4b_2^2}>0,
	\end{align*}
	and 
	
	\begin{equation*}
		\begin{aligned}
			|B|=& \frac{w_*}{4b_2^2b_3}\left[4d_1d_2b_2u_*v_*-b_2\chi^2w_*(d_1b_2u_*
			+d_2b_3v_*)u^2v^2\right.\\
			&-\left.b_3\xi^2v_*^2u^2-b_2b_3\xi\chi^2w_*v_*u^3v^2\right]\\
			\ge& \frac{w_*}{4b_2^2b_3}\left[4d_1d_2b_2u_*v_*-b_2\chi^2w_*(d_1b_2u_*
			+d_2b_3v_*)\|u\|_{L^\infty}^2\|v\|_{L^\infty}^2\right.\\
			&-\left.b_3\xi^2v_*^2\|u\|_{L^\infty}^2-b_2b_3\xi\chi^2w_*v_*
			\|u\|_{L^\infty}^3\|v\|_{L^\infty}^2\right]\\
			>&0, 
		\end{aligned} 
	\end{equation*}
	since $ \|u\|_{L^\infty} $ is independent of $ \xi,\chi $ 
	and $ \|v\|_{L^\infty} $ is independent of $ \chi $. 
	Thus, $ B $ is positive definite and hence 
	\begin{align}\label{4.7}
		YBY^T>0.
	\end{align}
	
	We now substitute \eqref{4.6} and \eqref{4.7} into \eqref{4.5} and notice the 
	inequality $ (a-b)(a^q-b^q)\ge \max\{a^{q-1},b^{q-1}\}(a-b)^2 $ for $ a,b>0 $ 
	and $ q>1 $, one can find a constant $ c_2 $ such that
	\begin{equation}
		\begin{aligned}
			\frac{d}{dt}\mathcal{E}(t)+ c_2\left(\int_{\Omega} 
			(u-u_*)^2+\int_{\Omega} (v-v_*)^2+\int_{\Omega} (w-w_*)^2\right)<0.
		\end{aligned}
	\end{equation}
	Then, by the same argument as in the proof of \cite[Lemma 4.3]{JWW}, we can find 
	constants $ c_3,c_4>0 $ such that
	\begin{equation}
		\|u-u_*\|_{L^2}+\|v-v_*\|_{L^2}+\|w-w_*\|_{L^2}\le c_3e^{-c_4t}, \quad 
		\text{ for all } t>t_0.
	\end{equation}
	where $ t_0>1 $ is a constant. The proof is completed.
\end{proof}

Similar to the proof of \cite[Lemma 4.2]{JWW}, we can get the higher regularity 
of solutions as follows.
\begin{lemma}\label{lemma 4.2}
	Let $(u, v, w)$ be the unique global bounded classical solution of 
	\eqref{model} given by Theorem \ref{GS}.
	Then for any given $0<\alpha <1,$ there exists a constant $C>0$ such that
	\begin{equation*}
		\|u(\cdot,t)\|_{C^{2+\alpha, 1+\frac{\alpha}{2}}(\bar \Omega \times [1, 
			\infty))}+\|v(\cdot,t)\|_{C^{2+\alpha, 1+\frac{\alpha}{2}}(\bar \Omega 
			\times [1, \infty))}+\|w(\cdot,t)\|_{C^{2+\alpha, 
				1+\frac{\alpha}{2}}(\bar \Omega \times [1, \infty))}\leq C.
	\end{equation*}
\end{lemma}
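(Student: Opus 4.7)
The plan is to apply a standard parabolic bootstrap, taking the uniform bound $\|u(\cdot,t)\|_{W^{1,\infty}}+\|v(\cdot,t)\|_{W^{1,\infty}}+\|w(\cdot,t)\|_{L^\infty}\le C$ from Theorem \ref{GS} as the sole input and successively upgrading to $C^{2+\alpha,1+\alpha/2}$ via parabolic $L^p$ estimates followed by Schauder estimates for the Neumann problem. Restricting to $t\ge 1$ only serves to place us safely away from the initial surface, and since every input bound is uniform in $t$, every output will also be uniform in $t$.

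First I would observe that the quantities entering the three equations of \eqref{model} are all uniformly bounded: the reaction terms $r_1u(1-u)-b_1uv-b_3uw$, $r_2v(1-v)+uv-b_2vw$, $r_3w(1-w^\sigma)+uw+vw$ lie in $L^\infty(\Omega\times(0,\infty))$, and the flux quantities $v\nabla u$ as well as $w\nabla(uv)=wu\nabla v+wv\nabla u$ are also bounded. Treating each equation as a linear Neumann problem of divergence form $\phi_t-d_i\Delta\phi=-\nabla\cdot\vec B+F$ with $\vec B,F\in L^\infty$, standard interior and up-to-boundary parabolic $L^p$ theory yields, for every finite $p$, a uniform-in-$T$ bound for $u,v,w$ in $W^{2,1}_p(\Omega\times[1,T])$. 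For $p$ large enough the parabolic embedding $W^{2,1}_p\hookrightarrow C^{1+\beta,(1+\beta)/2}$ gives uniform Hölder continuity of $u,v,w,\nabla u,\nabla v$ with some exponent $\beta\in(0,1)$ that I may choose to exceed $\alpha$.

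With these Hölder bounds established, every coefficient in \eqref{model} (namely $\xi v$, $\chi wu$, $\chi wv$) and every reaction term lies in $C^{\beta,\beta/2}(\bar\Omega\times[1,\infty))$. A second pass using parabolic Schauder estimates up to the boundary for the Neumann problem (as in the classical treatment of Ladyzhenskaya, Solonnikov and Ural'tseva) then promotes $u,v,w$ to $C^{2+\alpha,1+\alpha/2}$, uniformly on $\bar\Omega\times[1,\infty)$, which is the claim.

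The main technical point will be to close the bootstrap in the correct order: the drift in the $v$-equation involves $\nabla u$ and the drift in the $w$-equation involves both $\nabla u$ and $\nabla v$, so one upgrades $u$ first, then $v$, then $w$, possibly iterating the $L^p$ step once to bring coefficients into the Hölder class before Schauder becomes applicable. The uniformity on the half-line $[1,\infty)$ is obtained by applying the above local-in-time estimates on unit parabolic cylinders $\Omega\times[t,t+2]$ for arbitrary $t\ge 1$, exploiting that all inputs from Theorem \ref{GS} are $t$-independent so the resulting Schauder norms cannot depend on $t$ either. This is exactly the argument sketched in \cite[Lemma 4.2]{JWW}, which is cited in the statement.
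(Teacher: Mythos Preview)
Your outline is essentially correct and is precisely the standard parabolic bootstrap the paper defers to by citing \cite[Lemma~4.2]{JWW}; the paper itself offers no argument beyond that citation.

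One sentence should be tightened. Writing each equation in divergence form $\phi_t-d_i\Delta\phi=-\nabla\cdot\vec B+F$ with merely $\vec B\in L^\infty$ does \emph{not} yield $W^{2,1}_p$ via parabolic $L^p$ theory: the right-hand side $-\nabla\cdot\vec B$ is only a distribution, and divergence-form estimates give at best H\"older continuity or $\nabla\phi\in L^p$, not second spatial derivatives. What actually works is the sequential non-divergence argument you describe in your final paragraph: from $u_t-d_1\Delta u=F_1\in L^\infty$ one obtains $u\in W^{2,1}_p$ and hence $\Delta u\in L^p$; then the $v$-equation, rewritten as $v_t-d_2\Delta v+\xi\nabla u\cdot\nabla v=-\xi v\Delta u+F_2$, has bounded drift coefficient and $L^p$ source, so $v\in W^{2,1}_p$; and likewise for $w$ once $\Delta u,\Delta v\in L^p$ are in hand. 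With that correction the $L^p$ step closes in the order $u\to v\to w$, and the subsequent Schauder pass in the same order finishes the proof exactly as you indicate.
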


With the above lemmas in hand, we now prove Theorem \ref{GST}.
\begin{proof}[Proof of Theorem \ref{GST}]
	By Lemma \ref{lemma 4.1}, we obtain 
	\begin{equation*}
		\|u-u_*\|_{L^2}+\|v-v_*\|_{L^2}+\|w-w_*\|_{L^2}\le c_1e^{-c_2t}, 
		\quad 
		\text{ for all } t>t_0.
	\end{equation*} 
	Using Lemma \ref{lemma 4.2}, one has
	\begin{align*}
		\|u-u_*\|_{W^{1,\infty}}+\|v-v_*\|_{W^{1,\infty}}+\|w-w_*\|
		_{W^{1,\infty}}\le c_3, 
		\quad 
		\text{ for all } t>1.
	\end{align*}
	Then applying Gagliardo-Nirenberg inequality, we get
	\begin{align*}
		\|u-u_*\|_{L^\infty}\le 
		c_4\|u-u_*\|_{W^{1,\infty}}^\frac{1}{2}\|u-u_*\|_{L^2}^\frac{1}{2}\le 
		c_1^{\frac{1}{2}}c_3^{\frac{1}{2}}c_4e^{-\frac{c_2}{2}t}, \quad \text{ for 
			all } t>t_0.
	\end{align*}
	Similarly, we obtain
	\begin{align*}
		\|u-u_*\|_{L^\infty}+\|v-v_*\|_{L^\infty}+\|w-w_*\|_{L^\infty}\le 
		c_5e^{-c_6t}, \quad \text{ for 
			all } t>t_0.
	\end{align*}
	This completes the proof of Theorem \ref{GST}.
\end{proof}

\end{document}